\documentclass[11pt]{article}

\usepackage{array}
\usepackage{booktabs}
\usepackage{enumitem}
\newcolumntype{C}[1]{>{\centering\arraybackslash}m{#1}}
\usepackage{subfig}
\usepackage{graphicx}
\usepackage{amssymb}
\usepackage{mathtools}
\usepackage{amsmath}
\usepackage{amsthm}
\usepackage{latexsym}
\usepackage{amsfonts}
\usepackage{etoolbox}
\usepackage{multirow}
\usepackage{float}
\usepackage{amsmath,color}
\usepackage{url}
\usepackage[hidelinks]{hyperref}
\usepackage[T1]{fontenc}
\usepackage{doi}

\usepackage{arydshln}

\usepackage{lipsum}

\newtheorem{theorem}{Theorem}{}
\newtheorem{lemma}[theorem]{Lemma}
\newtheorem{proposition}[theorem]{Proposition}
\newtheorem{corollary}[theorem]{Corollary}
\newtheorem{remark}[theorem]{Remark}

\newtheorem{problem}[theorem]{Problem}

\newcommand{\Spec}{\operatorname{Spec}}
\newcommand{\mult}{\operatorname{mult}}
\newcommand{\mm}{\operatorname{mm}}
\newcommand{\te}{\operatorname{te}}
\newcommand{\tm}{\operatorname{tm}}
\newtheorem{definition}[theorem]{Definition}

\renewcommand{\thetheorem}{\textcolor{black}{\arabic{theorem}}}

\usepackage[utf8]{inputenc}
\usepackage{authblk}

\usepackage{tikz}
\usetikzlibrary{positioning}

\DeclareMathOperator{\rank}{rank}

\newcommand{\rev}[1]{\textcolor{black}{#1}}
\newcommand{\revblue}[1]{\textcolor{black}{#1}}

\title{\bf Metric dimension and maximum eigenvalue multiplicity of graph Laplacians}

\author{Monther R. Alfuraidan}
\author{Suliman Khan}
\author{Lorenzo Sauras-Altuzarra}
\affil{{\small\rev{Department of Mathematics \& Interdisciplinary Center of Smart Mobility and Logistics,\\
King Fahd University of Petroleum \& Minerals, Dhahran 31261, Saudi Arabia}\\
\texttt{monther@kfupm.edu.sa, suliman.khan.1@kfupm.edu.sa, lorenzo.altuzarra@kfupm.edu.sa}}}

\date{}

\begin{document}

\maketitle

\begin{abstract} We study equality between metric dimension and the maximum eigenvalue multiplicity of the Laplacian and signless Laplacian matrices. These quantities are not ordered in general. Using the standard decomposition associated with twin classes, we separate forced multiplicities from quotient-spectrum contributions. We give complete equality classifications for complete multipartite graphs and core--satellite graphs, and enumerate the multipartite equality graphs by order. For graphs formed by attaching at least two leaves to every vertex of a connected core, the metric dimension and the multiplicity of the eigenvalue \(1\) are known; we show that this multiplicity is maximal for both matrices, and uniquely maximal except for \(P_3\). We enumerate this core--leaf family up to isomorphism and obtain a uniform asymptotic formula by order and metric dimension, together with the quadratic exponential growth rate \(1/18\). An exact census of connected graphs of orders two through seven identifies the first separating examples and examples without twins. Finally, \rev{for every odd \(r\in\mathbb{Z}_{\geq 3}\)}, we study \(\Theta_{2,2,r}\) and determine both maximum multiplicities: the two equality properties alternate with \(r\) modulo four, producing infinite separating families in both directions. \end{abstract}

\bigskip

\noindent\textbf{Keywords:}
\rev{Twin vertices; equitable partitions; graph enumeration; complete multipartite graphs; core--satellite graphs; theta graphs.}

\medskip

\noindent\textbf{Mathematics Subject Classification (2020):}
05C12; 05C30; 05C50; 15A18.

\section{Introduction}

The metric dimension of a graph was introduced independently by
Slater~\cite{Slater1975} and Harary and Melter
\cite{HararyMelter1976}. A set of vertices is resolving if every
vertex is uniquely determined by its vector of distances to that set,
and the minimum cardinality of a resolving set is the metric
dimension, denoted by \(\dim(G)\). Metric dimension has been studied
extensively, both as a combinatorial parameter and through
applications in navigation, network discovery, chemistry, and
localization; see
\rev{\cite{Chartrand2000,Khuller1996}}.

Spectral graph theory relates graph structure to the eigenvalues of
matrices associated with a graph. In this paper, we consider the
Laplacian matrix \(L(G)\) and the signless Laplacian matrix \(Q(G)\);
standard references include
\rev{\cite{Brouwer2012,Cvetkovic2007,Cvetkovic2010,GodsilRoyle2001}}.
\rev{For each choice \(X\in\{L,Q\}\), define the \emph{maximum multiplicity} (abbreviated \(\mm_X\)) by}
\[
\rev{\mm_X}(G)=
\max_{\lambda\in\operatorname{Spec}(X(G))}
\operatorname{mult}_{X(G)}(\lambda).
\]
\rev{Here \(\operatorname{mult}_{X(G)}(\lambda)\) is the dimension of the eigenspace at \(\lambda\). We maximize the multiplicity over all eigenvalues, rather than taking the multiplicity of the largest eigenvalue.}
Our purpose is to compare \(\rev{\mm_X}(G)\) with \(\dim(G)\), and in
particular to determine when
\(
\rev{\mm_X}(G)=\dim(G).
\)

Neither \(\rev{\mm_L}(G)\) nor \(\rev{\mm_Q}(G)\) is universally bounded by \(\dim(G)\), and the reverse inequalities also fail. For example, the four-dimensional hypercube has Laplacian and signless Laplacian characteristic polynomial
\[
z(z-8)(z-6)^4(z-4)^6(z-2)^4,
\]
as follows from the Cartesian-product spectrum. Its vertices are binary strings of length four, and \(\{0000,1000,0100,0010\}\) is resolving: distance to \(0000\) gives the weight, while the other three distances determine the first three coordinates and hence the fourth. Thus \(\rev{\mm_L}=\rev{\mm_Q}=6>\dim(G)\), since \(\dim(G)\leq4\). In the opposite direction, Corollary~\ref{CoreSatelliteLQSeparation} gives \(\rev{\mm_L}(K_2\vee2K_2)=2<3=\dim(K_2\vee2K_2)\), and Theorem~\ref{ThetaAlternation} gives \(\rev{\mm_Q}(\Theta_{2,2,3})=1<2=\dim(\Theta_{2,2,3})\). The equality question therefore does not describe saturation of a universal inequality between the two parameters.

Farhan and Baskoro~\cite{FarhanBaskoro2026} studied metric dimension and adjacency nullity, as well as bounds involving maximum Laplacian and distance eigenvalue multiplicities for trees. Their subsequent work on rooted products~\cite[Proposition~3.3]{FarhanBaskoroFurther2026} includes the metric-dimension formula for the core--leaf construction considered here. For trees, as for all bipartite graphs, \(L\) and \(Q\) are similar; distinctions between the two equality properties therefore require nonbipartite graphs.

Twin vertices provide a natural link between metric dimension and
spectral multiplicity. Vertices in the same twin class cannot be
distinguished by vertices outside that class, yielding the standard
lower bound
\[
\dim(G)\geq\sum_C\bigl(|C|-1\bigr),
\]
\rev{where the sum is over the twin classes containing at least two vertices; see}
\cite{Hernando2010}. On the spectral side, false and true twin
classes generate repeated Laplacian and signless Laplacian
eigenvalues
\cite{Abreu2012,GroneMerris1994}.

The same symmetry gives invariant zero-sum subspaces for the two matrices. We define these spaces and the associated quotient matrices in Section~\ref{sec:twins}; their orthogonal decomposition separates eigenvalue multiplicities forced by twins from additional multiplicities in the quotient. Here a collision means that the same eigenvalue occurs in two of these restrictions, so their multiplicities add. Core--leaf graphs illustrate equality already accounted for by twin subspaces, whereas the separating theta and core--satellite families require such a collision. The twin-free examples show that these mechanisms do not exhaust the equality question.

The main results concern the equality classifications, rather than the standard decomposition itself. In Section~\ref{sec:twins}, we summarize the twin and quotient contributions and specialize known \(H\)-join factorizations~\cite{CardosoHJoin2013,SaravananHJoin2021}. Section~\ref{sec:multipartite} determines the maximum multiplicities for complete multipartite graphs, including all signless Laplacian collisions, and classifies and counts the equality cases. The two classifications differ for exactly five graphs.

Section~\ref{sec:satellite} treats \(K_c\vee(\eta K_s)\). Its adjacency and Laplacian spectra are known~\cite{EstradaBenziCoreSatellite}, \revblue{and its metric dimension for \(c=1\) follows from the known formula for vertex-amalgamations of complete graphs~\cite{SimanjuntakAmalgamation2015}.} We give a unified metric argument and classify both spectral equality properties. In particular, \(K_2\vee(\eta K_2)\) satisfies, \rev{for every \(\eta\in\mathbb{Z}_{\geq 2}\)}, only the signless Laplacian equality.

For the core--leaf construction in Section~\ref{sec:leaf}, both the metric dimension~\rev{\cite{FarhanBaskoroFurther2026,Iswadi2008}} and the exact multiplicity of the eigenvalue \(1\)~\cite{AndradeCardosoPastenRojo2015} are known. Our spectral argument bounds every other multiplicity, establishing simultaneous equality and uniqueness of the maximum except for \(P_3\). We then count these graphs up to isomorphism, obtain a uniform asymptotic estimate, and determine their quadratic exponential growth rate. \revblue{Proposition~\ref{CoreLeafOneSingleLeaf} also allows one core vertex of degree at least two to receive just one leaf, while preserving both equalities and uniqueness of the maximum.}

Section~\ref{sec:census} gives an exact census and its finite enumeration procedure for orders two through seven. Section~\ref{sec:theta} studies \(\Theta_{2,2,r}\) \rev{for every odd \(r\in\mathbb{Z}_{\geq 3}\)}. Its metric dimension follows from the classification of Knor, Sedlar, and \v{S}krekovski~\cite{Knor2022}, and the multiplicity of the signless Laplacian eigenvalue \(2\) is a specialization of Paone and Paone~\cite[Corollary~8.1]{PaonePaone2026}. We control the rest of the spectrum and the Laplacian spectrum to determine both maximum multiplicities. The resulting congruence conditions modulo four give infinite separating families in both directions. Section~\ref{sec:conclusion} records some remaining questions.

\section{Preliminaries}

All graphs are finite and simple, and \(G\) is connected unless stated otherwise. We use \(V(G)\) and \(E(G)\) for its vertex and edge sets, and \(|V(G)|\) for its order. For every vertex \(v\), its open neighborhood is \(N(v)=\{u:uv\in E(G)\}\), its closed neighborhood is \(N[v]=N(v)\cup\{v\}\), and its degree is \(\deg_G(v)=|N(v)|\). For every pair of vertices \(u,v\), \(d_G(u,v)\), or \(d(u,v)\) when the graph is clear, is the length of a shortest path between them.

\rev{For every \(n\in\mathbb{Z}_{\geq1}\)}, \(K_n\) and \(P_n\) denote the complete graph and the path on \(n\) vertices. \rev{For every \(k\in\mathbb{Z}_{\geq 2}\)} and every choice of \rev{\(n_1,\ldots,n_k\in\mathbb{Z}_{\geq1}\)}, the complete multipartite graph \(K_{n_1,\ldots,n_k}\) has independent parts of those sizes and every edge between distinct parts. The complement \(\overline G\) has the same vertices as \(G\) and exactly the missing edges between distinct vertices. The join \(G\vee H\) of two vertex-disjoint graphs retains their original edges and adds the edge \(\{u,v\}\) for every \(u\in V(G)\) and every \(v\in V(H)\). Thus each vertex of one graph is adjacent to every vertex of the other. \rev{For every \(a\in\mathbb{Z}_{\geq1}\)}, \(aH\) denotes the disjoint union of \(a\) copies of \(H\).

For a real symmetric matrix \(M\), \(\Spec(M)\) is its multiset of eigenvalues, and \(\mult_M(\lambda)=\dim\ker(M-\lambda I)\) is the multiplicity of a real number \(\lambda\). It is zero when \(\lambda\) is not an eigenvalue. The notation \(\lambda^{(a)}\) in a spectrum records \(a\) copies of \(\lambda\). An eigenvalue is \emph{simple} when its multiplicity is one, and a spectrum is simple when every eigenvalue is simple. The \emph{nullity} of a matrix is the dimension of its kernel. We write \(I_n\) for the identity matrix, \(M^T\) for the transpose, \(\operatorname{diag}(a_1,\ldots,a_n)\) for a diagonal matrix, and \(\chi_M(z)=\det(zI-M)\) for the characteristic polynomial; the order of an unsubscripted \(I\) is determined by \(M\).

We keep the conventional symbols \(A,D,L,Q\), \(\dim\), \(\Spec\), and \(\mult\). The paper-specific abbreviations \(\mm\), \(\te\), and \(\tm\) stand for maximum multiplicity, twin excess, and twin-induced multiplicity, respectively. Within each section, the graph families and parameters are defined before the results; subsequent statements retain these definitions unless an additional assumption is specified. For every \(a\in\mathbb{Z}\), write \(\mathbb{Z}_{\geq a}=\{m\in\mathbb{Z}:m\geq a\}\). Integer index ranges in sums and products are understood to be inclusive. When a sum is written as \(\sum(a_i:i\in I,\ P(i))\), the conditions after the colon specify its indexed terms; equal values at different indices are counted separately.

\subsection{Metric dimension}

Let
\(
S=\{s_1,\dots,s_t\}\subseteq V(G).
\)
The \emph{distance representation} of a vertex \(v\in V(G)\) with respect to \(S\) is defined by
\(
r(v|S)=
(d(v,s_1),\dots,d(v,s_t)).
\)
The set \(S\) is called a \emph{resolving set} if
\(
r(u|S)\neq r(v|S)
\)
for all distinct vertices \(u,v\in V(G)\).
The \emph{metric dimension} of \(G\), denoted by \(\dim(G)\), is the minimum cardinality of a resolving set.

\subsection{Laplacian and signless Laplacian matrices}

The adjacency matrix, degree matrix, Laplacian matrix, and signless Laplacian matrix of a graph \(G\) are denoted by \(A(G)\), \(D(G)\), \(L(G)\), and \(Q(G)\), respectively, where
\(L(G)=D(G)-A(G), \qquad Q(G)=D(G)+A(G).\)
The signless Laplacian matrix and its basic spectral properties are discussed in~\cite{Cvetkovic2007}.

Thus \(\mm_L(G)\) is the largest multiplicity attained by any eigenvalue of \(L(G)\), and \(\mm_Q(G)\) is defined analogously for \(Q(G)\). Neither notation denotes the multiplicity of the largest eigenvalue. Empty indexed families are omitted when taking a maximum.

\subsection{Twin vertices}

Twin structures play a central role throughout this paper.

\begin{definition}
Two distinct vertices \(u,v\in V(G)\) are called \emph{twins} if
\(
N(u)\setminus\{v\}=N(v)\setminus\{u\}.
\)
They are called \emph{false twins} if \(uv\notin E(G)\), equivalently
\(N(u)=N(v)\), and \emph{true twins} if \(uv\in E(G)\), equivalently
\(N[u]=N[v]\).

The twin relation, together with equality, is an equivalence relation
on \(V(G)\). Its equivalence classes are called \emph{twin classes}.
\rev{A twin class is \emph{nontrivial} when it contains at least two vertices. The vertices of each such class form either an independent set (a false twin class) or a clique (a true twin class). A graph is \emph{twin-free} when it has no nontrivial twin classes; the twin relation is a relation between vertices.}
\end{definition}

Let
\(
\mathcal{T}=\{C_1,\ldots,C_t\}
\)
denote the collection of all nontrivial twin classes of $G$. We define
the \emph{twin excess} of $G$ by
\[
\rev{\te}(G)=\sum_{i=1}^{t}\bigl(|C_i|-1\bigr).
\]
Thus, $\rev{\te}(G)$ gives the total contribution of the nontrivial twin
classes to the standard lower bound for the metric dimension.

The following standard bound is a direct consequence of the twin property; see Hernando et al.~\cite[Lemma~2.3 and Corollary~2.4]{Hernando2010}.

\begin{lemma}\label{TwinLowerBound}
\rev{The twin excess satisfies \(\dim(G)\geq\rev{\te}(G)\).}
\end{lemma}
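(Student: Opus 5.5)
The plan is to show that every resolving set must contain all but at least one vertex of each nontrivial twin class, and then to sum over the classes. The argument uses only the definition of twins and of distance representations.

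\emph{Step 1: twins are equidistant from all other vertices.} Let \(u,v\) be twins and let \(w\in V(G)\setminus\{u,v\}\). We claim \(d(u,w)=d(v,w)\). Take a shortest path from \(u\) to \(w\), of length at least one. Its second vertex \(x\) lies in \(N(u)\).
\begin{itemize}
\item If \(x\neq v\), then \(x\in N(u)\setminus\{v\}=N(v)\setminus\{u\}\). Replacing \(u\) by \(v\) at the start of the path gives a walk from \(v\) to \(w\) of the same length, so \(d(v,w)\leq d(u,w)\).
\item If \(x=v\), the path passes through \(v\), so \(d(v,w)<d(u,w)\).
\end{itemize}
By symmetry \(d(u,w)\leq d(v,w)\) in the first case. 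In the second case the symmetric argument applied to a shortest \(v\)--\(w\) path, which cannot pass through \(u\), gives \(d(u,w)\leq d(v,w)\) as well. Hence \(d(u,w)=d(v,w)\) in all cases. Consequently, any set \(S\) with \(u,v\notin S\) gives \(r(u|S)=r(v|S)\).

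\emph{Step 2: a resolving set misses at most one vertex per class.} Let \(S\) be a resolving set and \(C_i\) a nontrivial twin class. Since twin classes are equivalence classes, any two distinct vertices of \(C_i\) are twins. If two of them were both outside \(S\), Step~1 would give them the same representation, contradicting that \(S\) is resolving. Hence \(|S\cap C_i|\geq |C_i|-1\).

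\emph{Step 3: summing.} Because the classes \(C_1,\ldots,C_t\) are pairwise disjoint,
\[
|S|\geq\sum_{i=1}^{t}|S\cap C_i|\geq \te(G).
\]
Taking \(S\) of minimum size gives \(\dim(G)\geq\te(G)\).

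The only delicate point is Step~1, in particular the case where a shortest path from one twin to \(w\) passes through the other twin; this is the case a naive path-swapping argument tends to overlook, and it is handled above by the strict inequality it forces. Alternatively, the lemma can be quoted directly from Hernando et al.~\cite[Lemma~2.3 and Corollary~2.4]{Hernando2010}.
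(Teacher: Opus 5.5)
Your proposal is correct and follows the paper's argument: no vertex other than \(u,v\) distinguishes two twins \(u,v\), so a resolving set contains all but at most one vertex of each nontrivial twin class, and summing over the disjoint classes gives \(\dim(G)\geq\te(G)\). Your Step~1 proves the equidistance fact that the paper asserts in one line. Because it treats every \(w\notin\{u,v\}\), it also covers resolving vertices inside the class itself, which the paper's phrase ``outside a twin class'' leaves implicit; your second case is actually impossible, since the two inequalities you derive there contradict each other, but this does not affect the conclusion.
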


\begin{proof}
No vertex outside a twin class \(C\) distinguishes two vertices of
\(C\). Thus every resolving set contains at least \(|C|-1\) vertices
from each nontrivial twin class. Summing over all nontrivial twin
classes gives
\[
\dim(G)\geq
\sum_{C\in\mathcal T}(|C|-1)
=\rev{\te}(G).
\]
\end{proof}

\section{Twin Subspaces and Quotient Spectra}\label{sec:twins}

We collect the standard spectral facts needed below. For every nontrivial twin class \(C\), its vertices have the same degree; write \(\deg(C)=\deg_G(v)\) for any \(v\in C\). For a vector \(x\in\mathbb R^{V(G)}\), its coordinate \(x_v\) is indexed by vertex \(v\), and its support is \(\operatorname{supp}(x)=\{v:x_v\ne0\}\). Put
\[
W_C=\left\{x\in\mathbb R^{V(G)}:\operatorname{supp}(x)\subseteq C,
\ \sum_{v\in C}x_v=0\right\},\qquad \dim W_C=|C|-1.
\]

Let \(\Pi\) be the partition of \(V(G)\) into all twin classes, including singletons. The parts of a partition will be called its \emph{cells}. Define
\[
\mathcal U=\{x\in\mathbb R^{V(G)}:
x_u=x_v\text{ for every }C\in\Pi\text{ and every }u,v\in C\}.
\]
The common value may differ between cells; a singleton imposes no constraint. A subspace \(W\) is \(M\)-invariant when \(Mx\in W\) for every \(x\in W\).

For a matrix \(M\) indexed by \(V(G)\), a partition \(\Pi=\{V_1,\ldots,V_q\}\) is \emph{equitable for \(M\)} if, for every pair of indices \(i,j\), the sum
\(\sum_{v\in V_j}M_{uv}\) has the same value for every \(u\in V_i\). Its \emph{row-sum quotient} \(R_{M,\Pi}\) has that value as entry \((i,j)\). For the adjacency matrix, this says that every vertex in \(V_i\) has the same number of neighbors in \(V_j\). The twin partition is equitable for \(A(G)\), \(L(G)\), and \(Q(G)\).

If \(M\) is symmetric, its restriction to cell-constant vectors is represented in the orthonormal basis \(\mathbf1_{V_i}/\sqrt{|V_i|}\) by the \emph{normalized quotient}
\[
\widehat R_{M,\Pi}=S_\Pi R_{M,\Pi}S_\Pi^{-1},\qquad
S_\Pi=\operatorname{diag}(\sqrt{|V_1|},\ldots,\sqrt{|V_q|}).
\]
Here \(\mathbf1_{V_i}\) equals one on \(V_i\) and zero elsewhere. Normalization refers to these basis vectors having unit norm. Two matrices \(M,N\) are \emph{similar} when \(N=S^{-1}MS\) for some invertible matrix \(S\); they are diagonally similar when \(S\) is diagonal. Similar matrices have the same characteristic polynomial.
 The twin eigenvalues and their equitable-partition interpretation are standard; see~\rev{\cite{Abreu2012,AbreuJustelMarkenzon2020,Barrett2017,Cardoso2007,GroneMerris1994,Kirkland2023}}.

\begin{proposition}\label{ExactMultiplicity}
For each choice \(X\in\{L,Q\}\), the orthogonal decomposition
\[
\mathbb R^{V(G)}=\mathcal U\oplus\bigoplus_{C\in\mathcal T}W_C
\]
is \(X(G)\)-invariant. For every \(C\in\mathcal T\) and every \(x\in W_C\), one has \(X(G)x=\theta_C^Xx\), where
\[
\begin{array}{c|cc}
 &\theta_C^L&\theta_C^Q\\\hline
C\text{ a false twin class}&\rev{\deg(C)}&\rev{\deg(C)}\\
C\text{ a true twin class}&\rev{\deg(C)}+1&\rev{\deg(C)}-1
\end{array}
\]
Writing \(B_X=\widehat R_{X(G),\Pi}\) for this normalized quotient, for every real \(\lambda\) we have
\[
\mult_{X(G)}(\lambda)=
\sum\bigl(|C|-1:C\in\mathcal T,\ \theta_C^X=\lambda\bigr)
+\mult_{B_X}(\lambda).
\]
\end{proposition}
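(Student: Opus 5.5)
The plan is to verify the decomposition directly and then read off multiplicities from the block-diagonal form. First I will check orthogonality and dimension. Each \(W_C\) is supported on \(C\), and distinct nontrivial twin classes are disjoint, so the \(W_C\) are mutually orthogonal. A vector \(u\in\mathcal U\) is constant on \(C\), and every \(x\in W_C\) has zero sum on \(C\), so \(\langle u,x\rangle=0\). For dimensions, \(\dim\mathcal U=|\Pi|\) and \(\sum_{C\in\mathcal T}(|C|-1)=|V(G)|-|\Pi|\), since singletons contribute zero. Hence the sum is all of \(\mathbb R^{V(G)}\).

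Next I will show that \(W_C\) is an eigenspace piece with the stated eigenvalue. Fix \(x\in W_C\) and write \(X(G)=D\pm A\). For \(w\notin C\), the twin property says \(w\) is adjacent to all of \(C\) or to none of it. So \((Ax)_w\) is either \(\sum_{v\in C}x_v=0\) or \(0\), and likewise \((Dx)_w=0\). For \(u\in C\), we have \((Dx)_u=\deg(C)x_u\). If \(C\) is false twin, \(u\) has no neighbours in \(C\), so \((Ax)_u=0\). If \(C\) is true twin, \((Ax)_u=\sum_{v\in C,v\ne u}x_v=-x_u\). This gives \(\theta^L=\deg(C)\) or \(\deg(C)+1\), and \(\theta^Q=\deg(C)\) or \(\deg(C)-1\), matching the table. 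Thus each \(W_C\) is invariant.

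Since \(X(G)\) is symmetric, the orthogonal complement \(\mathcal U\) of the invariant sum \(\bigoplus W_C\) is also invariant. Equivalently, one can use that the twin partition is equitable: \(X(G)\mathbf 1_{V_j}=\sum_i (R_{X,\Pi})_{ij}\)-type combinations of the \(\mathbf 1_{V_i}\), computed rowwise from equitability. In the orthonormal basis \(\mathbf 1_{V_i}/\sqrt{|V_i|}\), the restriction of \(X(G)\) to \(\mathcal U\) has matrix \(S_\Pi R S_\Pi^{-1}=B_X\). I will check this as a routine one-line computation: the \((i,j)\) entry is \(\mathbf 1_{V_i}^T X\mathbf 1_{V_j}/\sqrt{|V_i||V_j|}=|V_i|R_{ij}/\sqrt{|V_i||V_j|}\).

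Finally, for a symmetric matrix with an invariant orthogonal decomposition, the \(\lambda\)-eigenspace is the direct sum of the \(\lambda\)-eigenspaces of the restrictions. So \(\mult_{X(G)}(\lambda)\) is the sum of \(\dim W_C=|C|-1\) over those \(C\) with \(\theta_C^X=\lambda\), plus \(\mult_{B_X}(\lambda)\). I will justify this by projecting an eigenvector onto each summand, since the projections commute with \(X(G)\).

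The only mildly delicate step is the true-twin sign bookkeeping and confirming that outside vertices see all or none of \(C\). Everything else is routine linear algebra.
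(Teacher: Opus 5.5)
Your proposal is correct and follows essentially the same route as the paper: the scalar action on each \(W_C\) via the all-or-none adjacency of outside vertices, invariance of \(\mathcal U=(\bigoplus_C W_C)^\perp\) by symmetry of \(X(G)\), and additivity of eigenspace dimensions across the invariant summands. You also supply two details the paper leaves implicit: the dimension count showing the sum is all of \(\mathbb R^{V(G)}\), and the check that the restriction to \(\mathcal U\) in the basis \(\mathbf 1_{V_i}/\sqrt{|V_i|}\) is exactly \(S_\Pi R_{X(G),\Pi}S_\Pi^{-1}\).
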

\begin{proof}
For every \(x\in W_C\), coordinates outside \(C\) vanish after applying \(L\) or \(Q\), because each such vertex is adjacent to either all or none of \(C\). Inside \(C\), the adjacency contribution is zero for false twins and \(-x\) for true twins, giving the displayed eigenvalues. The spaces \(W_C\) are mutually orthogonal, and their orthogonal complement is exactly \(\mathcal U\). Since \(X(G)\) is symmetric, this complement is invariant. Taking eigenspace dimensions gives the multiplicity identity. This is the usual equitable-partition decomposition specialized to twins~\cite[Chapter~9]{GodsilRoyle2001}.
\end{proof}

We say that a class \(C\) \emph{induces} \(\theta_C^X\) precisely in the scalar-action sense just specified. Its contribution to that multiplicity is \(\dim W_C=|C|-1\); the quotient contributes the additional term \(\mult_{B_X}(\lambda)\). Define
\[
\rev{\tm_X}(G)=\max_{\lambda\in\mathbb R}
\sum\bigl(|C|-1:C\in\mathcal T,\ \theta_C^X=\lambda\bigr),
\]
with value zero when \(\mathcal T\) is empty. Then
\[
\rev{\tm_X}(G)\leq\rev{\te}(G)\leq\dim(G),\qquad \rev{\mm_X}(G)\geq\rev{\tm_X}(G).
\]
These are bounds from a common parameter, not a comparison between \(\rev{\mm_X}(G)\) and \(\dim(G)\).

\begin{remark}\label{MainTheoremUnified}
The multiplicity identity gives a sufficient criterion: if \(\rev{\tm_X}(G)=\dim(G)\), no twin-induced eigenvalue belongs to \(\Spec(B_X)\), and every eigenvalue of \(B_X\) has multiplicity at most \(\rev{\tm_X}(G)\), then \(\rev{\mm_X}(G)=\dim(G)\). The last condition follows, for example, if the number of cells of the twin partition is at most \(\rev{\tm_X}(G)\).

This criterion has a restricted scope. Its first hypothesis requires a tight twin bound and, when there are nontrivial twins, requires all their contributions to occur at a single eigenvalue. \rev{For every odd \(r\in\mathbb{Z}_{\geq 3}\)}, the graph \(\Theta_{2,2,r}\) has \(\rev{\tm_X}=\rev{\te}=1<2=\dim\); in \(K_2\vee(\eta K_2)\), \(\rev{\tm_Q}=\eta<\eta+1=\dim\). Their equality cases arise through quotient collisions and require the separate arguments below. For a core with two leaves attached at each of its \(h\) vertices, the quotient has order \(2h\), whereas \(\rev{\tm_X}=h\); bounding its dimension alone is insufficient. The proof of Theorem~\ref{CoreLeafEquality} instead controls its eigenspaces.
\end{remark}

\subsection{\rev{Mixed twin blow-ups}}\label{sec:blowups}

The \(H\)-join replaces every vertex \(i\) of a graph \(H\) by a graph \(F_i\) on a disjoint vertex set, and adds every edge between \(F_i\) and \(F_j\) exactly when \(ij\in E(H)\). These replacement vertex sets are its cells. We specialize the known \(H\)-join factorizations~\cite{CardosoHJoin2013,SaravananHJoin2021} to complete or edgeless cells.

\rev{Fix \(h\in\mathbb{Z}_{\geq1}\) and a connected graph \(H\) on \(\{1,\ldots,h\}\). For every \(i\in\{1,\ldots,h\}\), fix \(n_i\in\mathbb{Z}_{\geq1}\) and a type \(\varepsilon_i\in\{0,1\}\).} Replace vertex $i$
by a cell $V_i$ of size $n_i$, inducing an independent set when
$\varepsilon_i=0$ and a clique when $\varepsilon_i=1$. Join every vertex
of $V_i$ to every vertex of $V_j$ precisely when $ij\in E(H)$. Denote the
resulting graph by
\(
G=H[(n_1,\varepsilon_1),\ldots,(n_h,\varepsilon_h)].
\)
\rev{We require \(G\) to be connected; the only excluded case is \(h=1\), \(\varepsilon_1=0\), \(n_1>1\).}
\rev{For every \(i\in\{1,\ldots,h\}\), define the number \(\delta_i\) of neighbors outside \(V_i\) and the degree \(\deg_i\) of each vertex in \(V_i\) by}
\[
\delta_i=\sum_{j\sim_H i}n_j,
\qquad \rev{\deg_i}=\delta_i+\varepsilon_i(n_i-1).
\]

The following proposition is the corresponding specialization of the
known \(H\)-join factorization.

\begin{proposition}\label{BlowUpFactorization}
Define symmetric matrices $\widehat B_L$ and $\widehat B_Q$ of order $h$ by \rev{the following diagonal and off-diagonal formulas, where the latter apply only for \(i\ne j\):}
\[
(\widehat B_L)_{ii}=\delta_i,
\qquad
(\widehat B_L)_{ij}=
\begin{cases}-\sqrt{n_i n_j},&ij\in E(H),\\0,&ij\notin E(H),\end{cases}
\]
and
\[
(\widehat B_Q)_{ii}=\delta_i+2\varepsilon_i(n_i-1),
\qquad
(\widehat B_Q)_{ij}=
\begin{cases}\sqrt{n_i n_j},&ij\in E(H),\\0,&ij\notin E(H).\end{cases}
\]
Then
\begin{align*}
\det(zI-L(G))
 &=\det(zI-\widehat B_L)
   \prod_{i=1}^h(z-\rev{\deg_i}-\varepsilon_i)^{n_i-1},\\
\det(zI-Q(G))
 &=\det(zI-\widehat B_Q)
   \prod_{i=1}^h(z-\rev{\deg_i}+\varepsilon_i)^{n_i-1}.
\end{align*}
Consequently, for $X\in\{L,Q\}$,
\[
\rev{\mm_X}(G)=\max_{\lambda\in\mathbb R}
\left\{\mult_{\widehat B_X}(\lambda)
+\sum_{\theta_i^X=\lambda}(n_i-1)\right\},
\]
where $\theta_i^L=\rev{\deg_i}+\varepsilon_i$ and
$\theta_i^Q=\rev{\deg_i}-\varepsilon_i$.
\end{proposition}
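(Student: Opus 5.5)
The plan is to obtain the factorization directly from the orthogonal decomposition used in Proposition~\ref{ExactMultiplicity}, now with the cells \(V_1,\ldots,V_h\) in place of twin classes. Even when two cells happen to be twins of each other, the cells still give the relevant decomposition. For each \(i\), put \(W_i=\{x:\operatorname{supp}(x)\subseteq V_i,\ \sum_{v\in V_i}x_v=0\}\), of dimension \(n_i-1\). Let \(\mathcal U'\) be the space of cell-constant vectors. Then \(\mathbb R^{V(G)}=\mathcal U'\oplus\bigoplus_i W_i\) is an orthogonal decomposition.

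First I check that each \(W_i\) consists of eigenvectors of \(L(G)\) and \(Q(G)\). Take \(x\in W_i\). Every vertex outside \(V_i\) is adjacent to all or none of \(V_i\), so \(A(G)x\) vanishes off \(V_i\). Inside \(V_i\), \(A(G)x\) equals \(0\) when \(\varepsilon_i=0\), and equals \((J-I)x=-x\) when \(\varepsilon_i=1\). Since every vertex of \(V_i\) has degree \(\deg_i\), this gives \(L(G)x=(\deg_i+\varepsilon_i)x\) and \(Q(G)x=(\deg_i-\varepsilon_i)x\). Each \(W_i\) therefore contributes the factor \((z-\theta_i^X)^{n_i-1}\).

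Since \(X(G)\) is symmetric, the orthogonal complement \(\mathcal U'\) is also invariant. I compute the restriction to \(\mathcal U'\) in the orthonormal basis \(\mathbf 1_{V_i}/\sqrt{n_i}\); the matrix entries are \(\mathbf 1_{V_i}^TX\mathbf 1_{V_j}/\sqrt{n_in_j}\).
\begin{itemize}
\item For \(i\ne j\) with \(ij\in E(H)\), the quadratic form counts \(n_in_j\) edges, which gives \(\mp\sqrt{n_in_j}\) for \(L\) and \(Q\) respectively. For \(ij\notin E(H)\) the entry is \(0\).
\item For the diagonal, \(\mathbf 1^T D\mathbf 1=n_i\deg_i\) and \(\mathbf 1^TA\mathbf 1=\varepsilon_in_i(n_i-1)\). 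So the \(L\) entry is \(\deg_i-\varepsilon_i(n_i-1)=\delta_i\), and the \(Q\) entry is \(\delta_i+2\varepsilon_i(n_i-1)\).
\end{itemize}
These are exactly \(\widehat B_L\) and \(\widehat B_Q\).

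The characteristic polynomial of \(X(G)\) is the product of those of its restrictions to the invariant summands, which gives both displayed identities. Because \(X(G)\) is symmetric, the multiplicity of \(\lambda\) is the sum of its multiplicities in the restrictions. That is \(\mult_{\widehat B_X}(\lambda)+\sum_{\theta_i^X=\lambda}(n_i-1)\), and maximizing over \(\lambda\) gives the formula for \(\mm_X(G)\).

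Nothing here is a real obstacle. The only care needed is the diagonal bookkeeping, namely that the within-clique adjacency term cancels against part of the degree for \(L\) and doubles for \(Q\). Everything else is the equitable-partition argument already used in Proposition~\ref{ExactMultiplicity}.
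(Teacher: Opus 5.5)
Your proposal is correct and follows the paper's proof: you use the same orthogonal decomposition into the cell-constant space and the zero-sum spaces $W_i$, and you compute the restriction in the same orthonormal basis $\mathbf 1_{V_i}/\sqrt{n_i}$. You also write out the entrywise calculation that the paper summarizes as ``direct calculation'', in particular the diagonal entries $\deg_i\mp\varepsilon_i(n_i-1)$ for $L$ and $Q$, respectively.
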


\begin{proof}
For each $i$, let
\[
\rev{W_i=\left\{x:\operatorname{supp}(x)\subseteq V_i,
\ \sum_{v\in V_i}x_v=0\right\}.}
\]
On $W_i$, the matrices $L(G)$ and $Q(G)$ act as multiplication by
$\rev{\deg_i}+\varepsilon_i$ and $\rev{\deg_i}-\varepsilon_i$, respectively. Moreover,
\[
\mathbb R^{V(G)}=\mathcal U\oplus\bigoplus_{i=1}^hW_i,
\]
where $\mathcal U$ consists of vectors constant on every cell. In the
orthonormal basis $e_i=\mathbf1_{V_i}/\sqrt{n_i}$ of $\mathcal U$, direct
calculation gives $\widehat B_L$ and $\widehat B_Q$ as the matrices of the
two restrictions. The factorizations and multiplicity formula follow.
\end{proof}

\begin{remark}

For this cell partition, write \(R_X\) for the row-sum quotient of \(X(G)\), for each choice \(X\in\{L,Q\}\). With \(S=\operatorname{diag}(\sqrt{n_1},\ldots,\sqrt{n_h})\), one has \(\widehat B_X=SR_XS^{-1}\). Thus the two explicitly defined quotient matrices are diagonally similar. The normalization used in Proposition~\ref{BlowUpFactorization} is the unit-vector normalization defined above, not a normalization of the graph Laplacian.

\end{remark}

\section{Complete Multipartite Graphs}\label{sec:multipartite}

Laplacian spectra of complete multipartite graphs are classical; see, for example, Brouwer and Haemers~\cite{Brouwer2012}. Using the quotient factorization developed in \rev{Section~\ref{sec:blowups}}, we treat the signless Laplacian simultaneously and determine the maximum Laplacian and signless Laplacian eigenvalue multiplicities. We then obtain a complete classification of the complete multipartite graphs satisfying
\(\dim(G)=\rev{\mm_L}(G),\)
and separately those satisfying
\(\dim(G)=\rev{\mm_Q}(G).\)

\rev{\rev{Fix \(k\in\mathbb{Z}_{\geq2}\)} and \rev{\(n_1,\ldots,n_k\in\mathbb{Z}_{\geq1}\)}. Throughout this section let \(G=K_{n_1,\ldots,n_k}\), and put}
\(
n=\sum_{i=1}^k n_i.
\)
Let
\(
\mathcal S=\{n_i:1\le i\le k\}
\)
be the set of distinct part sizes. For every \(s\in\mathcal S\), put
\(
a_s=|\{i:n_i=s\}|.
\)
Thus,
\[
\sum_{s\in\mathcal S}a_s=k,
\qquad
\sum_{s\in\mathcal S}a_ss=n.
\]
We also put
\(
r=a_1
\)
when \(1\in\mathcal S\), and \(r=0\) otherwise.

\revblue{The metric-dimension formula for complete multipartite graphs, including singleton parts, is recorded by Okamoto, Phinezy, and Zhang~\cite[Lemma~2.7]{Okamoto2010}.} 
The next proposition records the metric-dimension formula, including singleton parts, and separately records the classical Laplacian spectrum and its maximum multiplicity. The metric argument uses the twin-class bound~\cite{Hernando2010}; the spectral notation \(\lambda^{(a)}\) means that \(\lambda\) occurs with multiplicity \(a\).

\begin{proposition}\label{MultipartiteMetricSpectrum}
\rev{With the notation fixed above,}
\(
\dim(G)=n-k+\max\{r-1,0\},
\)
and
\(
\Spec(L(G))=
\{0^{(1)},n^{(k-1)}\}
\cup\rev{\bigcup_{s\in\mathcal S\setminus\{1\}}}
\{(n-s)^{(a_s(s-1))}\}.
\)
Consequently,
\[
\rev{\mm_L}(G)=
\max\left\{
k-1,
\rev{\max\{a_s(s-1):s\in\mathcal S,\ s\ge2\}}
\right\}.
\]
\end{proposition}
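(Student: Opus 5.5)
Both halves come from the twin structure of \(G=K_{n_1,\ldots,n_k}\): metric dimension from Lemma~\ref{TwinLowerBound} plus an explicit resolving set, the spectrum from Proposition~\ref{BlowUpFactorization} with \(H=K_k\).

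\textbf{Twin classes.} Two vertices in the same part have equal open neighborhoods, so they are false twins. Two vertices in distinct parts \(V_i,V_j\) are true twins exactly when \(n_i=n_j=1\). Indeed, if \(u\in V_i\) and \(v\in V_j\), then \(N(u)\setminus\{v\}\) misses \(V_i\setminus\{u\}\), while \(N(v)\setminus\{u\}\) contains it. So the nontrivial twin classes are:
\begin{itemize}
\item every part of size \(s\ge2\), a false twin class;
\item the union of the \(r\) singleton parts when \(r\ge2\), a true twin class.
\end{itemize}
Hence \(\te(G)=\sum_{n_i\ge2}(n_i-1)+\max\{r-1,0\}=n-k+\max\{r-1,0\}\), since singleton parts contribute \(0\) to \(\sum_i(n_i-1)\). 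Lemma~\ref{TwinLowerBound} gives the lower bound \(\dim(G)\ge\te(G)\).

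\textbf{Matching upper bound.} Let \(S\) consist of:
\begin{itemize}
\item all but one vertex of each part of size \(\ge2\), the omitted vertex of \(V_i\) being \(w_i\);
\item all but one singleton vertex, if \(r\ge1\).
\end{itemize}
Distances in \(G\) are \(1\) between parts and \(2\) within a part, so it suffices to check that the vertices outside \(S\) have distinct representations. The excluded vertices are the \(w_i\), one per non-singleton part, and at most one singleton \(u\). The vertex \(w_i\) is at distance \(2\) from the members of \(S\cap V_i\), which is nonempty, and at distance \(1\) from every other member of \(S\). The vertex \(u\) is at distance \(1\) from every member of \(S\). These vectors are pairwise distinct. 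This needs only \(S\ne\emptyset\) or a single excluded vertex. The degenerate case \(S=\emptyset\) forces \(G=K_1\), which cannot occur since \(k\ge2\). (If every part is a singleton, then \(G=K_n\) and \(S\) has \(n-1\) vertices, which is correct.) This gives the formula for \(\dim(G)\).

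\textbf{Spectrum.} Apply Proposition~\ref{BlowUpFactorization} with \(H=K_k\) and all \(\varepsilon_i=0\). Then \(\delta_i=n-n_i\) and \(\deg_i=n-n_i\), so each part contributes the factor \((z-(n-n_i))^{n_i-1}\). The quotient is \(\widehat B_L=nI-vv^T\) with \(v=(\sqrt{n_1},\ldots,\sqrt{n_k})^T\) and \(\|v\|^2=n\). Therefore \(\widehat B_L\) has eigenvalue \(0\) once, on \(v\), and eigenvalue \(n\) with multiplicity \(k-1\), on \(v^\perp\). Grouping parts by size \(s\) gives \((n-s)^{(a_s(s-1))}\) for each \(s\ge2\), which is the stated spectrum.

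\textbf{Maximum multiplicity.} We must account for coincidences among the listed eigenvalues.
\begin{itemize}
\item Distinct sizes \(s\) give distinct values \(n-s\).
\item \(n-s\ne n\) because \(s\ge1\).
\item \(n-s\ne0\) for \(s\ge2\), because then some other part exists and \(n>s\).
\end{itemize}
So the listed eigenvalues are pairwise distinct and \(\mm_L(G)\) is the maximum of \(k-1\), the multiplicity of the eigenvalue \(n\), and the values \(a_s(s-1)\) for \(s\ge2\).

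The main care point is the metric upper bound when \(r\ge1\), namely that the excluded singleton \(u\) is separated from the \(w_i\). It is, because each \(w_i\) has a distance-\(2\) coordinate and \(u\) does not. The rest is routine.
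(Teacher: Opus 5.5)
Your proof is correct and follows the paper's route: the twin-class lower bound matched by the same all-but-one resolving set, and Proposition~\ref{BlowUpFactorization} applied to $K_k$ with independent cells, which gives the quotient $nI-uu^T$ with spectrum $\{0^{(1)},n^{(k-1)}\}$. Your extra checks fill in details the paper only asserts: vertices in distinct parts are twins only when both parts are singletons, the omitted vertices have distinct distance vectors, and $0$, $n$ and the values $n-s$ are pairwise distinct (with the simple eigenvalue $0$ dominated because $k-1\geq 1$).
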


\begin{proof}
Every nonsingleton part is a false twin class, while the vertices in all
singleton parts form one true twin class when \(r\ge2\). Hence every
resolving set has size at least
\[
\rev{\sum\bigl(a_s(s-1):s\in\mathcal S,\ s\ge2\bigr)}
+\max\{r-1,0\}
=
n-k+\max\{r-1,0\}.
\]
Selecting all but one vertex from every nonsingleton part and, if
\(r\ge2\), all but one singleton vertex gives a resolving set of this
size.

View \(G\) as an independent-set blow-up of \(K_k\). A part of size
\(n_i\) induces the Laplacian eigenvalue \(n-n_i\) with contribution
\(n_i-1\). The normalized Laplacian quotient is \(nI_k-uu^T\), where
\(u=(\sqrt{n_1},\ldots,\sqrt{n_k})^T.\)
Since \(u^Tu=n\), this quotient has spectrum
\(
\{0^{(1)},n^{(k-1)}\}.
\)
The assertions concerning the Laplacian spectrum and its maximum
eigenvalue multiplicity follow.
\end{proof}

Using Proposition~\ref{BlowUpFactorization}, we group parts of equal
size to obtain the following distinct-part-size reduction. Its purpose
is to make the repeated
quotient eigenvalues, the remaining simple roots, and all possible
\rev{eigenvalue collisions explicit: a collision occurs when an eigenvalue supplied by one factor of the decomposition is also supplied by another, so the corresponding multiplicities add.} This form will be used to determine
the maximum signless Laplacian eigenvalue multiplicity and to obtain
the corresponding equality classification.

\begin{theorem}\label{QQuotientReduction}
\rev{The signless Laplacian characteristic polynomial is}
\[
\det(zI-Q(G))
=
\prod_{s\in\mathcal S}
\bigl(z-(n-s)\bigr)^{a_s(s-1)}
\bigl(z-(n-2s)\bigr)^{a_s-1}
P_G(z),
\]
where
\[
P_G(z)
=
\prod_{s\in\mathcal S}\bigl(z-(n-2s)\bigr)
-
\sum_{s\in\mathcal S}
a_ss
\rev{\prod_{t\in\mathcal S\setminus\{s\}}}
\bigl(z-(n-2t)\bigr).
\]
Moreover, \(P_G\) has degree \(|\mathcal S|\), all its roots are simple,
\rev{and \(P_G(n-2s)\ne0\) for every \(s\in\mathcal S\).}
\end{theorem}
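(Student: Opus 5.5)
Treat \(G=K_{n_1,\ldots,n_k}\) as the independent-set blow-up \(K_k[(n_1,0),\ldots,(n_k,0)]\) and apply Proposition~\ref{BlowUpFactorization}. Since every \(\varepsilon_i=0\), we have \(\delta_i=n-n_i=\deg_i\). Each part therefore contributes the factor \((z-(n-n_i))^{n_i-1}\), and grouping parts of equal size gives \(\prod_{s}(z-(n-s))^{a_s(s-1)}\).

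The normalized quotient has diagonal entries \(n-n_i\) and off-diagonal entries \(\sqrt{n_in_j}\). With \(u=(\sqrt{n_1},\ldots,\sqrt{n_k})^T\), this is
\[
\widehat B_Q=D+uu^T,\qquad D=\operatorname{diag}(n-2n_1,\ldots,n-2n_k),
\]
because \((uu^T)_{ii}=n_i\). The matrix determinant lemma then gives
\[
\det(zI-\widehat B_Q)=\det(zI-D)\Bigl(1-\sum_i\frac{n_i}{z-(n-2n_i)}\Bigr).
\]
Grouping indices by part size, \(\det(zI-D)=\prod_s(z-(n-2s))^{a_s}\), and the sum becomes \(\sum_s a_ss/(z-(n-2s))\). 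We pull out one factor \((z-(n-2s))\) for each \(s\in\mathcal S\) to clear denominators; the remaining factors are \(\prod_s(z-(n-2s))^{a_s-1}\). Clearing denominators in this way produces exactly \(P_G(z)\), which yields the stated factorization. The identity holds as a polynomial identity after clearing denominators, so it is valid at the poles as well.

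For the remaining claims, \(P_G\) is monic of degree \(|\mathcal S|\), because the sum term has degree \(|\mathcal S|-1\). Evaluating at \(z=n-2s\), the product term vanishes and only the \(s\)-summand survives, giving
\[
P_G(n-2s)=-a_ss\prod_{t\ne s}\bigl(2t-2s\bigr).
\]
This is nonzero because \(a_s,s\ge1\) and the sizes in \(\mathcal S\) are distinct.

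Simplicity of the roots is the main step. On \(\mathbb R\setminus\{n-2s\}\) we have \(P_G(z)=\prod_s(z-(n-2s))\,f(z)\), where
\[
f(z)=1-\sum_s\frac{a_ss}{z-(n-2s)}.
\]
Since \(f'(z)=\sum_s a_ss/(z-(n-2s))^2>0\), the function \(f\) is strictly increasing on each interval between consecutive poles. On each bounded interval it goes from \(-\infty\) to \(+\infty\), giving exactly one root there; this accounts for \(|\mathcal S|-1\) roots. On the rightmost interval it goes from \(-\infty\) to \(1\), giving one further root. On the leftmost interval \(f<1\) but \(f\to 1^{+}\)? In fact \(f>1\) there, since every term \(a_ss/(z-(n-2s))\) is negative, so there is no root on that interval.

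This gives \(|\mathcal S|\) distinct real roots, none of them a pole, which equals the degree of \(P_G\). Hence all roots of \(P_G\) are simple. This interlacing count is the step requiring care. Since \(\widehat B_Q\) is symmetric, real-rootedness is already guaranteed, so the counting alone settles simplicity.
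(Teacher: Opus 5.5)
Your proof is correct. It shares the paper's skeleton: Proposition~\ref{BlowUpFactorization}, the quotient \(\operatorname{diag}(n-2n_i)+uu^T\), the matrix determinant lemma, and a rank-one interlacing argument. It differs in two places.

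First, the paper splits off, inside the quotient, the zero-sum subspaces on the coordinates of equal-size parts, on which it acts as \(n-2s\). This produces the factor \((z-(n-2s))^{a_s-1}\) together with explicit eigenvectors, and the determinant lemma is then applied only to the compressed \(|\mathcal S|\times|\mathcal S|\) matrix \(\operatorname{diag}(n-2s)+vv^T\) with \(v=(\sqrt{a_ss})\). You instead apply the lemma to the full \(k\times k\) quotient and extract the powers \(a_s-1\) by algebraic cancellation.

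Second, the paper obtains both simplicity and \(P_G(n-2s)\ne0\) at once by citing strict interlacing for a rank-one perturbation of a diagonal matrix with distinct entries by a vector with nonzero coordinates. You evaluate \(P_G(n-2s)\) directly and prove the interlacing yourself via the monotone secular function. This makes your version self-contained, at the cost of a little length, while the paper's subspace splitting gives an eigenvector-level explanation of the repeated quotient eigenvalues.

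Two cosmetic fixes. The sentence ``On the leftmost interval \(f<1\) but \(f\to1^{+}\)?'' should simply state that \(f>1\) there. The closing appeal to real-rootedness is unnecessary, since exhibiting \(|\mathcal S|\) distinct roots of a polynomial of degree \(|\mathcal S|\) already proves simplicity.
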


\begin{proof}
By Proposition~\ref{BlowUpFactorization},
\[
\det(zI-Q(G))
=
\det(zI_k-S_Q)
\prod_{i=1}^k\bigl(z-(n-n_i)\bigr)^{n_i-1},
\]
where
\(
S_Q
=
\operatorname{diag}(n-2n_1,\ldots,n-2n_k)+uu^T
\)
and
\(u=(\sqrt{n_1},\ldots,\sqrt{n_k})^T.\)
Grouping the factors corresponding to equal part sizes gives
\[
\prod_{i=1}^k\bigl(z-(n-n_i)\bigr)^{n_i-1}
=
\prod_{s\in\mathcal S}
\bigl(z-(n-s)\bigr)^{a_s(s-1)}.
\]

For each \(s\in\mathcal S\), consider the coordinates of \(S_Q\)
corresponding to the \(a_s\) parts of size \(s\). The subspace of
vectors supported on these coordinates whose coordinate sum is zero
has dimension \(a_s-1\). On this subspace, \(S_Q\) acts as scalar
multiplication by \(n-2s\). Hence \(n-2s\) is an eigenvalue of \(S_Q\)
with multiplicity at least \(a_s-1\).

The orthogonal complement of the direct sum of these zero-sum
subspaces is spanned by the normalized vectors that are constant on
the coordinates associated with each distinct part size. In this
basis, the remaining restriction is
\(
R=
\operatorname{diag}(n-2s:s\in\mathcal S)+vv^T,
\)
where
\(v=(\sqrt{a_ss}:s\in\mathcal S)^T.\)
Consequently,
\[
\det(zI_k-S_Q)
=
\prod_{s\in\mathcal S}
\bigl(z-(n-2s)\bigr)^{a_s-1}
\det(zI-R).
\]
By the matrix determinant lemma,
\[
\begin{aligned}
\det(zI-R)
&=
\prod_{s\in\mathcal S}\bigl(z-(n-2s)\bigr)
\left(
1-\sum_{s\in\mathcal S}
\frac{a_ss}{z-(n-2s)}
\right)\\
&=
\prod_{s\in\mathcal S}\bigl(z-(n-2s)\bigr)\\
&\quad-
\sum_{s\in\mathcal S}
a_ss
\rev{\prod_{t\in\mathcal S\setminus\{s\}}}
\bigl(z-(n-2t)\bigr)\\
&=P_G(z).
\end{aligned}
\]

The diagonal entries \(n-2s\), \(s\in\mathcal S\), are distinct, and
every coordinate of \(v\) is nonzero. Therefore, the eigenvalues of
the rank-one perturbation
\[
R=\operatorname{diag}(n-2s:s\in\mathcal S)+vv^T
\]
strictly interlace the diagonal entries. It follows that all roots of
\(P_G\) are simple and none is equal to \(n-2s\). Combining the
factorizations proves the result.
\end{proof}

\rev{For every \(t\in\mathcal S\) with \(t\ge2\), define}
\[
\varepsilon_t=
\begin{cases}
1,
&
\displaystyle
t/2\notin\mathcal S
\quad\text{and}\quad
1+\sum_{s\in\mathcal S}\frac{a_ss}{t-2s}=0,\\[3mm]
0,&\text{otherwise}.
\end{cases}
\]

\begin{corollary}\label{ExactQMultiplicityMultipartite}
For every \(t\in\mathcal S\) with \(t\ge2\),
\[
\mult_{Q(G)}(n-t)
=
a_t(t-1)+
\begin{cases}
a_{t/2}-1,
& t \text{ is even and }t/2\in\mathcal S,\\[1mm]
\varepsilon_t,
& t/2\notin\mathcal S.
\end{cases}
\]

Moreover, \rev{using the multiplicities just displayed,}

\[
\begin{aligned}
\mm_Q(G)=\max\Bigl(&\{1\}\cup\{a_s-1:s\in\mathcal S,\ 2s\notin\mathcal S\}\\
 &{}\cup\{\mult_{Q(G)}(n-t):t\in\mathcal S,\ t\geq2\}\Bigr).
\end{aligned}
\]

\end{corollary}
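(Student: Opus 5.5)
The plan is to read everything off the factorization of Theorem~\ref{QQuotientReduction}. That factorization writes \(\det(zI-Q(G))\) as a product of three kinds of factors:
\[
\bigl(z-(n-s)\bigr)^{a_s(s-1)},\qquad \bigl(z-(n-2s)\bigr)^{a_s-1},\qquad P_G(z).
\]
The eigenvalues \(n-s\) with \(s\in\mathcal S\) are pairwise distinct, and so are the eigenvalues \(n-2s\). By Theorem~\ref{QQuotientReduction}, every root of \(P_G\) is simple and differs from every \(n-2s\). Hence only the following coincidences can occur:
\begin{itemize}
\item[(i)] \(n-t=n-2s\), that is, \(t=2s\);
\item[(ii)] \(n-t\) is a root of \(P_G\).
\end{itemize}
So \(\mult_{Q(G)}(\lambda)\) is the sum of the exponents of the factors vanishing at \(\lambda\), plus one if \(P_G(\lambda)=0\).

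For \(t\in\mathcal S\) with \(t\ge2\), the first factor contributes \(a_t(t-1)\). (For \(t=1\) this exponent is zero, which is why only \(t\ge2\) is listed.) Two cases remain.

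\emph{Case \(t\) even and \(t/2\in\mathcal S\).} Here \(n-t=n-2(t/2)\), so the factor \(\bigl(z-(n-2\cdot t/2)\bigr)^{a_{t/2}-1}\) adds \(a_{t/2}-1\). Since \(P_G(n-2s)\neq0\) for every \(s\in\mathcal S\), the factor \(P_G\) adds nothing.

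\emph{Case \(t/2\notin\mathcal S\).} Now no factor \(z-(n-2s)\) vanishes at \(n-t\), and the only possible extra contribution is \(1\) when \(P_G(n-t)=0\). Because \(t\ne 2s\) for all \(s\in\mathcal S\), we may divide by \(\prod_{s\in\mathcal S}\bigl((n-t)-(n-2s)\bigr)=\prod_{s\in\mathcal S}(2s-t)\neq0\). The determinant-lemma form in the proof of Theorem~\ref{QQuotientReduction} gives
\[
P_G(n-t)=\prod_{s\in\mathcal S}(2s-t)\Bigl(1-\sum_{s\in\mathcal S}\frac{a_ss}{2s-t}\Bigr),
\]
and \(1-\sum_{s} a_ss/(2s-t)=1+\sum_{s} a_ss/(t-2s)\). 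So \(P_G(n-t)=0\) exactly when \(\varepsilon_t=1\). This proves the first formula.

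For \(\mm_Q(G)\), partition the eigenvalues according to which factors produce them.
\begin{itemize}
\item[(a)] Eigenvalues \(n-t\) with \(t\in\mathcal S\), \(t\ge2\): these have the multiplicities just computed. Any eigenvalue of the form \(n-1\) comes from \(t=1\), whose first factor has exponent \(0\), so it belongs to (b) or (c).
\item[(b)] Eigenvalues \(n-2s\) not of the form \(n-t\) with \(t\in\mathcal S\), \(t\ge 2\), i.e.\ with \(2s\notin\mathcal S\) or \(2s=1\) (impossible): the multiplicity is exactly \(a_s-1\), since \(P_G(n-2s)\neq0\). Eigenvalues \(n-2s\) with \(2s\in\mathcal S\) are already covered in (a).
\item[(c)] Roots of \(P_G\) not covered by (a) or (b): these are simple and contribute \(1\).
\end{itemize}
The \(1\) in the maximum is always attained, since \(P_G\) has degree \(|\mathcal S|\ge1\) and so has at least one simple root. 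Including \(1\) therefore never overshoots, and it dominates any simple root counted in (c). Taking the maximum over (a), (b), (c) gives the displayed formula.

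The main obstacle is the bookkeeping that every collision is accounted for exactly once. In particular, one must check that for \(t\) even with \(t/2\in\mathcal S\) the quantity \(\varepsilon_t\) is correctly excluded: this rests on \(P_G(n-2s)\ne0\), which is already supplied by Theorem~\ref{QQuotientReduction}.
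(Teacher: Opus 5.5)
Your proposal is correct and follows the same route as the paper. You read the multiplicities off the factorization in Theorem~\ref{QQuotientReduction}, use \(P_G(n-2s)\neq0\) and the simplicity of the roots of \(P_G\) to exclude other collisions, and evaluate the determinant-lemma form at \(n-t\) to get exactly \(\varepsilon_t\). Your extra bookkeeping (the zero exponent at \(t=1\), and \(\mm_Q(G)\ge1\) absorbing the remaining simple roots) makes explicit what the paper's brief final sentence leaves implicit.
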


\begin{proof}
The factor
\(\bigl(z-(n-t)\bigr)^{a_t(t-1)}\)
gives the twin-induced contribution to the multiplicity of \(n-t\).

This eigenvalue coincides with a repeated quotient eigenvalue
\(n-2s\) precisely when \(t=2s\). Thus, if \(t\) is even and
\(t/2\in\mathcal S\), the additional contribution is
\(a_{t/2}-1\).

If \(t/2\notin\mathcal S\), then \(n-t\) is not one of the values
\(n-2s\). By Theorem~\ref{QQuotientReduction}, it is a root of
\(P_G\) precisely when
\[
1-\sum_{s\in\mathcal S}
\frac{a_ss}{(n-t)-(n-2s)}=0,
\]
or equivalently,
\[
1+\sum_{s\in\mathcal S}\frac{a_ss}{t-2s}=0.
\]
Since every root of \(P_G\) is simple, this contributes exactly
\(\varepsilon_t\).

Finally, a repeated quotient eigenvalue \(n-2s\) that does not coincide
with a twin-induced eigenvalue has multiplicity \(a_s-1\), and all
remaining roots of \(P_G\) are simple. Taking the maximum over all
possibilities proves the formula for \(\rev{\mm_Q}(G)\).
\end{proof}

\begin{theorem}\label{MultipartiteClassification}
\rev{With the notation of this section,}
\( \rev{\mm_L}(G)=\dim(G) \)
if and only if at least one of the following conditions holds:
\begin{enumerate}[label=\textup{(\roman*)}]
\item \(r\leq1\), and all parts of size at least \(2\) have the same
size;
\item \(r\geq1\), and every part has size \(1\) or \(2\).
\end{enumerate}
\end{theorem}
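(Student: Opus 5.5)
The plan is a direct comparison of the two closed formulas in Proposition~\ref{MultipartiteMetricSpectrum}. Write
\[
T=\sum\bigl(a_s(s-1):s\in\mathcal S,\ s\ge2\bigr),
\]
so that \(\dim(G)=T+\max\{r-1,0\}\) and \(k=r+\sum_{s\ge2}a_s\). The proof has two steps. First I show \(\mm_L(G)\le\dim(G)\) always holds for complete multipartite graphs. Then equality holds if and only if one of the terms in the maximum for \(\mm_L(G)\) equals \(\dim(G)\), and I find exactly when that happens.

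For the inequality, take the two kinds of term separately.
\begin{enumerate}[label=\textup{(\alph*)}]
\item Each \(a_s(s-1)\) with \(s\ge2\) is a summand of \(T\), so it is at most \(T\le\dim(G)\).
\item For \(k-1\), use \(a_s\le a_s(s-1)\) for \(s\ge2\). If \(r\ge1\), then
\[
k-1=(r-1)+\sum_{s\ge2}a_s\le (r-1)+T=\dim(G).
\]
If \(r=0\), then \(k-1=\sum_{s\ge2}a_s-1<T=\dim(G)\).
\end{enumerate}
Thus \(\mm_L(G)\le\dim(G)\), and equality means \(\dim(G)\) is attained by \(k-1\) or by some \(a_t(t-1)\).

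Next, when is \(k-1=\dim(G)\)? The case \(r=0\) is excluded by the strict inequality above. For \(r\ge1\), equality in (b) forces \(\sum_{s\ge2}a_s(s-2)=0\). This says every part has size \(1\) or \(2\), which is condition (ii). Conversely, under (ii) the same computation gives \(k-1=\dim(G)\).

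Finally, when is \(a_t(t-1)=\dim(G)\) for some \(t\ge2\)? This requires \(a_t(t-1)=T\) and \(\max\{r-1,0\}=0\). That means there is no other size \(s\ge2\) (each would add a positive summand to \(T\)), and \(r\le1\). Together this is condition (i). Conversely, assume (i). Since \(k\ge2\) and at most one part is a singleton, some part has size at least \(2\). Hence the common size \(t\) exists, and \(a_t(t-1)=T=\dim(G)\).

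Combining the two equality cases with the upper bound proves the theorem. The only delicate point is the bookkeeping of the singleton parts through \(\max\{r-1,0\}\), and making sure \(r=0\) cannot give equality through the \(k-1\) term. Both are handled by the strict inequality in (b) for \(r=0\).
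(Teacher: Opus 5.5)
Your proof is correct and follows essentially the same route as the paper: you bound each term \(k-1\) and \(a_s(s-1)\) of the formula in Proposition~\ref{MultipartiteMetricSpectrum} by \(\dim(G)\), determine when \(\dim(G)-(k-1)\) vanishes (separately for \(r=0\) and \(r\ge1\)), and determine when some \(a_t(t-1)\) equals \(\dim(G)\). You spell out the converse of condition~(i) a little more explicitly than the paper does, noting that \(k\ge2\) and \(r\le1\) force a nonsingleton part, but the argument is the same.
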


\begin{proof}
Put
\[
D=\dim(G)=\sum_{s\ge2}a_s(s-1)+\max\{r-1,0\}.
\]
Both $k-1$ and every $a_s(s-1)$ are at most $D$. By
Proposition~\ref{MultipartiteMetricSpectrum}, equality holds precisely when
\[
k-1=D
\quad\text{or}\quad a_s(s-1)=D\text{ for some }s\ge2.
\]
The second alternative is equivalent to condition~\textup{(i)}. If
$r=0$, then
\[
D-(k-1)=1+\sum_{i=1}^k(n_i-2)>0.
\]
If $r\ge1$, then
\[
D-(k-1)=\sum_{n_i\ge2}(n_i-2),
\]
which vanishes precisely under condition~\textup{(ii)}.
\end{proof}

\begin{remark}
In condition~\textup{(i)}, there is at least one nonsingleton part because \(k\geq2\) and \(r\leq1\). If there is exactly one, the equal-size requirement is automatic. Condition~\textup{(ii)} imposes no minimum on the number of nonsingleton parts: there may be zero, one, or several. Zero nonsingleton parts gives a complete graph.
\end{remark}

\begin{theorem}\label{MultipartiteQClassification}
\rev{With the same notation,}
\(
\rev{\mm_Q}(G)=\dim(G)
\)
\rev{if and only if at least one of the following conditions holds:}
\begin{enumerate}[label=\textup{(\roman*)}]
\item \(r=0\) and all parts have one common size;

\item \(r=1\), all nonsingleton parts have one common size, and
\(
G\ne K_{1,3,3};
\)

\item \rev{\(r\geq2\), and every nonsingleton part has size \(2\) (this includes complete graphs, for which there are no nonsingleton parts);}

\item \(G\) is one of the four exceptional graphs
\(
K_{1,1,3,3,3},\
K_{1,1,4,4},\
K_{2,5,5,5},\
K_{2,6,6}.
\)
\end{enumerate}
\end{theorem}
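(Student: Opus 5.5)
The plan is to compare $D=\dim(G)=\sum_{s\ge2}a_s(s-1)+\max\{r-1,0\}$, from Proposition~\ref{MultipartiteMetricSpectrum}, with the three kinds of candidates in Corollary~\ref{ExactQMultiplicityMultipartite}. These are the constant $1$, the repeated quotient values $a_s-1$ with $2s\notin\mathcal S$, and the twin-induced multiplicities $\mult_{Q(G)}(n-t)=a_t(t-1)+e_t$. Here $e_t$ is either $a_{t/2}-1$ or $\varepsilon_t\in\{0,1\}$. First I dispose of the small candidates. The value $1$ equals $D$ only when $D=1$, which is a short list of graphs ($K_{1,2}$, $K_{1,1,2}$, $K_{2,2}$-type cases, $K_2$ and $K_3$). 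I check these directly against (i)--(iii). A value $a_s-1$ with $s\ge2$ is strictly below $a_s(s-1)\le D$. For $s=1$ it equals $r-1$, which is below $D$ unless there are no nonsingleton parts, i.e.\ $G=K_n$, covered by (iii). Hence, outside these trivial cases, $\mm_Q(G)=D$ if and only if $a_t(t-1)+e_t=D$ for some $t\ge2$ in $\mathcal S$.

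Next I write $D-a_t(t-1)=\sum_{s\ge2,\,s\ne t}a_s(s-1)+\max\{r-1,0\}=:\rho_t$, so that the condition becomes $e_t=\rho_t$. I split according to whether $t/2\in\mathcal S$. If $t/2=u\in\mathcal S$ with $u\ge2$, then $\rho_t\ge a_u(u-1)\ge a_u>a_u-1=e_t$, so equality fails. If $u=1$ (so $t=2$), then $e_2=r-1$. Equality $r-1=\rho_2$ forces no other nonsingleton sizes besides $2$, with $r\ge1$. When $r=1$ this is case (ii) with common size $2$ (then $\varepsilon$ does not enter, since $t/2\in\mathcal S$). When $r\ge2$ it is case (iii). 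If $t/2\notin\mathcal S$, then $e_t=\varepsilon_t\in\{0,1\}$, so $\rho_t\in\{0,1\}$. If $\rho_t=0$, then $t$ is the only nonsingleton size and $r\le1$, and we need $\varepsilon_t=0$. If $\rho_t=1$, we need $\varepsilon_t=1$, and the remainder is exactly one extra part of size $2$ (with $r\le1$) or $r=2$ with no other nonsingleton sizes.

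The main obstacle is solving the resulting Diophantine equations $1+\sum_{s\in\mathcal S}a_ss/(t-2s)=0$ in each small configuration; I will do this explicitly.
\begin{enumerate}[label=\textup{(\alph*)}]
\item For $\mathcal S=\{t\}$ (that is, $r=0$), the sum is $1-a_t$. This vanishes only for $k=1$, which is excluded, so (i) always holds.
\item For $\mathcal S=\{1,t\}$ with $r=1$, the equation reads $1+1/(t-2)-a_t=0$, giving $a_t=(t-1)/(t-2)$. This forces $t=3$ and $a_t=2$. Hence every graph in (ii) works except $K_{1,3,3}$.
\item For $\mathcal S=\{1,t\}$ with $r=2$, the equation $1+2/(t-2)-a_t=0$ gives $a_t=t/(t-2)$, so $(t,a_t)\in\{(3,3),(4,2)\}$. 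This yields $K_{1,1,3,3,3}$ and $K_{1,1,4,4}$.
\item For $\mathcal S=\{2,t\}$ with one part of size $2$ and $r=0$, the equation $1+2/(t-4)-a_t=0$ gives $a_t=(t-2)/(t-4)$. With $t\ne4$, this forces $t\in\{5,6\}$, giving $K_{2,5,5,5}$ and $K_{2,6,6}$.
\item For $\mathcal S=\{1,2,t\}$ with one part of size $2$ and $r=1$, we have $t/2\notin\mathcal S$, so $t\ne2,4$. The equation $1+1/(t-2)+2/(t-4)-a_t=0$ must be checked for integrality. I expect it to have no solutions with $t\ge3$: $t=3$ gives $1+1-2=0$, so $a_3=0$, which is impossible. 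Larger $t$ gives a noninteger or a value below $1$.
\end{enumerate}
Collecting the cases gives exactly (i)--(iv), with $K_{1,3,3}$ removed from (ii). For the exceptional graphs in (iv) I will also confirm directly, via Corollary~\ref{ExactQMultiplicityMultipartite}, that no candidate exceeds $D$. This holds because $D$ is an upper bound on every candidate except $a_t(t-1)+1$ in the case $\rho_t=0$, which is exactly the $K_{1,3,3}$ phenomenon.
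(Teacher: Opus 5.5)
Your proposal is correct and follows the paper's proof essentially step for step. Your $\rho_t$ is the paper's $B_t$, and you make the same split on whether $t/2\in\mathcal S$ and solve the same collision equations in the configurations $\rho_t\in\{0,1\}$, excluding $K_{1,3,3}$ because its multiplicity $5$ exceeds $D=4$. One harmless slip: the only graphs with $D=1$ are $K_2$ and $K_{1,2}$, while $K_{1,1,2}$, $K_{2,2}$ and $K_3$ have $D=2$. Since all of them lie in (i)--(iii), your argument is unaffected.
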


\begin{proof}
\rev{For every \(s\in\mathcal S\) with \(s\ge2\), put}
\(
A_s=a_s(s-1),
\
R=\max\{r-1,0\}.
\)
By Proposition~\ref{MultipartiteMetricSpectrum},
\[
D:=\dim(G)=\rev{\sum\bigl(A_s:s\in\mathcal S,\ s\ge2\bigr)+R}.
\]

We use Corollary~\ref{ExactQMultiplicityMultipartite}. First consider a
repeated quotient eigenvalue \(n-2s\) that does not coincide with a
twin-induced eigenvalue. Its multiplicity is \(a_s-1\). If \(s\ge2\),
then
\(a_s-1<a_s(s-1)=A_s\le D.\)
If \(s=1\), its multiplicity is \(r-1\), which equals \(D\) precisely
when there are no nonsingleton parts. This gives the complete graphs,
which are contained in family~\textup{(iii)}. The remaining roots of
\(P_G\) are simple. They can therefore give the maximum multiplicity
only when \(D=1\), and the corresponding graphs are already contained
in the listed families.

It remains to consider a twin-induced eigenvalue \(n-t\), where
\(t\in\mathcal S\) and \(t\ge2\). Put
\[
B_t=D-A_t
=
\rev{\sum\bigl(A_s:s\in\mathcal S,\ s\ge2,\ s\ne t\bigr)+R}.
\]
Thus, \(B_t\) is the contribution to the metric dimension from all
twin classes other than the parts of size \(t\).

Suppose first that \(t\) is even and \(t/2\in\mathcal S\). By
Corollary~\ref{ExactQMultiplicityMultipartite},
\(\mult_{Q(G)}(n-t)=A_t+a_{t/2}-1.\)
If \(t/2\ge2\), then
\[
a_{t/2}-1
<
a_{t/2}\left(\frac{t}{2}-1\right)
=A_{t/2}
\le B_t.
\]
Hence the multiplicity is strictly smaller than \(D\).

The only remaining possibility is \(t/2=1\), or equivalently \(t=2\).
In this case,
\(\mult_{Q(G)}(n-2)=A_2+r-1.\)
This equals \(D\) precisely when no nonsingleton part has size other
than \(2\). When \(r=1\), these graphs are contained in
family~\textup{(ii)}, and when \(r\ge2\), they give
family~\textup{(iii)}.

We may now suppose that \(t/2\notin\mathcal S\). In this case,
\[
\mult_{Q(G)}(n-t)=A_t+\varepsilon_t,
\qquad
\varepsilon_t\in\{0,1\}.
\]
Therefore,
\(
\mult_{Q(G)}(n-t)=D
\)
if and only if
\(
B_t=\varepsilon_t.
\)
Since \(B_t\) is a nonnegative integer and
\(\varepsilon_t\in\{0,1\}\), we need only consider the cases
\(B_t=0\) and \(B_t=1\).

Suppose first that \(B_t=0\). Then \(r\le1\), and \(t\) is the only
nonsingleton part size.

If \(r=0\), then all parts have size \(t\). Since \(k=a_t\ge2\),
\[
1+\sum_{s\in\mathcal S}\frac{a_ss}{t-2s}
=
1-a_t\ne0.
\]
Thus \(\varepsilon_t=0\), giving family~\textup{(i)}.

If \(r=1\), then \(t\ge3\), because the case \(t=2\) was considered
above. The collision equation becomes
\(1+\frac{1}{t-2}-a_t=0.\)
This equation holds precisely when
\(
t=3,\ a_t=2.
\)
Thus \(\varepsilon_t=1\) only for \(K_{1,3,3}\). For that graph,
\(\rev{\mm_Q}(K_{1,3,3})=5>4=\dim(K_{1,3,3}).\)
All other graphs with \(r=1\) and one nonsingleton part size satisfy
the desired equality, giving family~\textup{(ii)}.

It remains to consider \(B_t=1\). There are exactly two possibilities.

First, \(R=1\) and there are no other nonsingleton part sizes. Thus
\(r=2\), and the collision equation is
\(1+\frac{2}{t-2}-a_t=0.\)
Equivalently,
\(a_t=1+\frac{2}{t-2}.\)
Since \(a_t\) is a positive integer and \(t\ge3\), this occurs
precisely when
\[
(t,a_t)=(3,3)
\quad\text{or}\quad
(t,a_t)=(4,2).
\]
These give
\(
K_{1,1,3,3,3}
\ \text{and}\
K_{1,1,4,4}.
\)

Second, \(R=0\), and the contribution from all other nonsingleton part
sizes is \(1\). Since
\(
A_s=a_s(s-1),
\)
this is possible only when there is exactly one part of size \(2\).
Thus the part sizes different from \(t\) consist of one part of size
\(2\), together with at most one singleton part.

Let \(r\in\{0,1\}\). The collision equation becomes
\(1+\frac{r}{t-2}+\frac{2}{t-4}-a_t=0.\)
If \(r=0\), then
\(a_t=1+\frac{2}{t-4}.\)
The positive integral solutions are
\[
(t,a_t)=(5,3)
\quad\text{and}\quad
(t,a_t)=(6,2),
\]
which give
\(
K_{2,5,5,5}
\ \text{and}\
K_{2,6,6}.
\)
If \(r=1\), the equation
\(a_t=1+\frac{1}{t-2}+\frac{2}{t-4}\)
has no positive integral solution for \(t\ge3\), \(t\ne4\).

These arguments show that the listed families are the only possible
equality cases. Conversely, substituting their part sizes into the
multiplicity formula of
Corollary~\ref{ExactQMultiplicityMultipartite} shows that every listed
graph satisfies
\(
\rev{\mm_Q}(G)=\dim(G).
\)
\end{proof}

\begin{corollary}\label{BalancedMultipartiteCor}
\rev{For every \(k,s\in\mathbb{Z}_{\geq2}\), the graph \(G=K_{s,\ldots,s}\) with \(k\) parts satisfies}
\(
\rev{\mm_L}(G)=\rev{\mm_Q}(G)=\dim(G)=k(s-1).
\)
\end{corollary}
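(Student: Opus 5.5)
This corollary is a direct specialization of the preceding results to the balanced case, so I plan to read off the three quantities one at a time. Take \(G=K_{s,\ldots,s}\) with \(k\) parts and \(k,s\ge2\). Then \(\mathcal S=\{s\}\), \(a_s=k\), \(n=ks\), and \(r=0\) because \(s\ge2\).

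\textbf{Metric dimension and \(\mm_L\).} Proposition~\ref{MultipartiteMetricSpectrum} gives
\[
\dim(G)=n-k+\max\{r-1,0\}=ks-k=k(s-1).
\]
The same proposition gives \(\mm_L(G)=\max\{k-1,\,k(s-1)\}\). Since \(s\ge2\), we have \(k(s-1)\ge k>k-1\), so \(\mm_L(G)=k(s-1)\). Alternatively, condition~\textup{(i)} of Theorem~\ref{MultipartiteClassification} holds, since \(r=0\) and all parts have one common size.

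\textbf{\(\mm_Q\).} Condition~\textup{(i)} of Theorem~\ref{MultipartiteQClassification} holds, since \(r=0\) and all parts have one common size, so \(\mm_Q(G)=\dim(G)\). For a self-contained check, Corollary~\ref{ExactQMultiplicityMultipartite} can be applied with \(t=s\), and there are two cases.
\begin{itemize}
\item If \(s\) is even, then \(s/2\notin\mathcal S\), because \(s/2<s\) is the only candidate and \(\mathcal S=\{s\}\). The collision expression is
\[
1+\frac{ks}{s-2s}=1-k\neq0,
\]
so \(\varepsilon_s=0\).
\item If \(s\) is odd, then \(s/2\) is not an integer, so again \(s/2\notin\mathcal S\) and the same computation gives \(\varepsilon_s=0\).
\end{itemize}
In both cases \(\mult_{Q(G)}(n-s)=k(s-1)\).

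The remaining candidates in the maximum of Corollary~\ref{ExactQMultiplicityMultipartite} are \(1\) and \(a_s-1=k-1\); the latter is included because \(2s\notin\mathcal S\). Both are at most \(k(s-1)\), since \(k(s-1)\ge k\ge2\). Hence \(\mm_Q(G)=k(s-1)\).

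No step here is a real obstacle. The only point needing care is confirming that no quotient collision raises the multiplicity of \(n-s\) above the twin-induced value, which is the check \(\varepsilon_s=0\) above.
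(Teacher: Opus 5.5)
Your proposal is correct and follows essentially the paper's route: the paper also reads off the Laplacian and signless Laplacian equalities from Theorems~\ref{MultipartiteClassification} and~\ref{MultipartiteQClassification}, with the metric dimension from Proposition~\ref{MultipartiteMetricSpectrum}. Your self-contained check via Corollary~\ref{ExactQMultiplicityMultipartite}, which shows \(\varepsilon_s=0\), plays the same role as the paper's alternative check through the explicit spectrum \(\{(n-s)^{(k(s-1))},(n-2s)^{(k-1)},(2n-2s)^{(1)}\}\).
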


\begin{proof}
The Laplacian assertion follows from
Theorem~\ref{MultipartiteClassification}, while the signless Laplacian
assertion follows from Theorem~\ref{MultipartiteQClassification}.
Alternatively, the signless Laplacian spectrum can be written explicitly as
\[
\Spec(Q(G))=
\{(n-s)^{(k(s-1))},(n-2s)^{(k-1)},(2n-2s)^{(1)}\}.
\]
The three eigenvalues are distinct and \(k(s-1)\ge k-1\).
\end{proof}

As boundary cases, the complete graph and \rev{star graph} satisfy
\[
\rev{\mm_L}(K_n)=\rev{\mm_Q}(K_n)=\dim(K_n)=n-1\quad\rev{\text{for every }n\in\mathbb{Z}_{\geq2}}
\]
and
\[
\rev{\mm_L}(K_{1,n-1})=\rev{\mm_Q}(K_{1,n-1})
=\dim(K_{1,n-1})=n-2\quad\rev{\text{for every }n\in\mathbb{Z}_{\geq3}}.
\]

\subsection{Enumeration of equality complete multipartite graphs}

The classifications obtained above also yield explicit enumeration formulae. Let \(\rev{\sigma_0(n)}\) denote the number of positive divisors of \(n\). \rev{\rev{For every \(n\in\mathbb{Z}_{\geq 2}\)}, let} \(a_L(n)\) and \(a_Q(n)\) denote, respectively, the numbers of nonisomorphic complete multipartite graphs \(G\) of order \(n\) satisfying \(\rev{\mm_L}(G)=\dim(G)\) and \(\rev{\mm_Q}(G)=\dim(G).\)

\begin{theorem}\label{MultipartiteEnumeration} \rev{\rev{For every \(n\in\mathbb{Z}_{\geq 2}\)},} $$ a_L(n) = \rev{\sigma_0(n)}+\rev{\sigma_0(n-1)}+\left\lfloor\frac n2\right\rfloor-3. $$ Moreover, $$ a_Q(n) = \begin{cases} a_L(n)-1,&n=7,\\ a_L(n)+1,&n\in\{10,11,14,17\},\\ a_L(n),&\text{otherwise}. \end{cases} $$ \end{theorem}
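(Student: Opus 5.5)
The plan is to read $a_L(n)$ and $a_Q(n)$ straight off the classifications in Theorem~\ref{MultipartiteClassification} and Theorem~\ref{MultipartiteQClassification}. A complete multipartite graph of order $n$ is determined up to isomorphism by the multiset of its part sizes, that is, by an integer partition of $n$ with at least $k\ge2$ parts. So each count reduces to counting partitions of the listed shapes and correcting for overlaps.

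For $a_L(n)$, I will split condition~(i) of Theorem~\ref{MultipartiteClassification} by the value of $r$.
\begin{enumerate}[label=\textup{(\alph*)}]
\item If $r=0$, all parts have a common size $s\ge2$ and there are $a=n/s\ge2$ of them. These correspond to the divisors $s$ of $n$ with $2\le s\le n/2$, that is, to all divisors except $1$ and $n$. This gives $\sigma_0(n)-2$ graphs.
\item If $r=1$, then $n-1=as$ with $a\ge1$ and $s\ge2$. These correspond to the divisors $s\ge2$ of $n-1$, giving $\sigma_0(n-1)-1$ graphs. This count is also correct for $n=2$, where $\sigma_0(1)-1=0$.
\end{enumerate}
Condition~(ii) says $n=r+2b$ with $r\ge1$ and $b\ge0$ parts of size $2$. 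Each $b\in\{0,\ldots,\lfloor (n-1)/2\rfloor\}$ gives one graph, and $k=r+b\ge2$ holds automatically for $n\ge2$. This gives $\lfloor (n-1)/2\rfloor+1$ graphs.

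The two conditions overlap only when $r=1$ and every other part has size $2$, i.e. the partition $(1,2,\ldots,2)$. This graph exists exactly when $n$ is odd, and then it appears once in each count. Since $\lfloor (n-1)/2\rfloor+1-[n\text{ odd}]=\lfloor n/2\rfloor$, summing the counts and subtracting the overlap gives $\sigma_0(n)+\sigma_0(n-1)+\lfloor n/2\rfloor-3$. I will check the small cases $n=2,3$ directly as a sanity test.

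For $a_Q(n)$, I will compare the two equality sets family by family rather than recount.
\begin{itemize}
\item Family~(i) of Theorem~\ref{MultipartiteQClassification} coincides with the $r=0$ part of the $L$-condition~(i).
\item Family~(ii) of the $Q$-classification is the $r=1$ part of the $L$-condition~(i) with $K_{1,3,3}$ removed. It already contains the $r=1$ graphs of the $L$-condition~(ii), since those have all nonsingleton parts of the common size $2$.
\item Family~(iii) of the $Q$-classification is exactly the $r\ge2$ part of the $L$-condition~(ii).
\end{itemize}
Hence the $L$-equality set equals $(\text{Q-families (i)--(iii)})\cup\{K_{1,3,3}\}$, and $K_{1,3,3}$ lies outside the $Q$-set. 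This accounts for the drop by one at $n=7$.

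It remains to handle the four exceptional graphs $K_{1,1,4,4}$, $K_{1,1,3,3,3}$, $K_{2,6,6}$ and $K_{2,5,5,5}$, of orders $10$, $11$, $14$ and $17$. Each must be shown to fail both $L$-conditions:
\begin{itemize}
\item the first two have $r=2$ and a part of size at least $3$;
\item the last two have $r=0$ and two distinct nonsingleton sizes.
\end{itemize}
So each adds exactly one graph at its order. The four orders are distinct from each other and from $7$, which yields the stated case formula.

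The only real obstacle is bookkeeping. The families must be partitioned so that nothing is double counted, in particular the overlap at odd $n$ and the small orders where $\lfloor n/2\rfloor$ or the divisor counts degenerate. I would verify the formula for $n\le 8$ by direct listing of partitions as a safeguard.
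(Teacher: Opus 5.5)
Your proposal is correct and follows the paper's proof essentially step for step. You use the same three-way count $\sigma_0(n)-2$, $\sigma_0(n-1)-1$, $\lfloor (n+1)/2\rfloor$, subtract the single overlap $K_{1,2,\ldots,2}$ at odd $n$, and obtain $a_Q$ from the same five-graph symmetric difference (your family-by-family matching of the two classifications makes explicit a check the paper only asserts).
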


\begin{proof}

By Theorem~\ref{MultipartiteClassification}, a complete multipartite graph \(G\) satisfies \(\rev{\mm_L}(G)=\dim(G)\) if and only if it belongs to one of the following two families: \begin{enumerate}[label=\textup{(\roman*)}] \item there is at most one singleton part, and all nonsingleton parts have the same size; \item there is at least one singleton part, and every part has size \(1\) or \(2\). \end{enumerate}

We count the graphs in these families up to isomorphism.

First suppose that there are no singleton parts. Then $$ G=K_{\underbrace{s,\ldots,s}_{k}}, \qquad\rev{s,k\in\mathbb{Z}_{\geq2}}, $$ and \(n=ks.\)

Thus \(s\) may be any proper divisor of \(n\) different from \(1\), giving \(\rev{\sigma_0(n)}-2\) graphs.

Next suppose that there is exactly one singleton part and that all nonsingleton parts have a common size \(s\geq2\). Then $$ G=K_{1,\underbrace{s,\ldots,s}_{k}}, \qquad\rev{k\in\mathbb{Z}_{\geq1}}, $$ and \(n-1=ks.\)

Hence \(s\) may be any divisor of \(n-1\) different from \(1\), giving \(\rev{\sigma_0(n-1)}-1\) graphs.

Finally, consider the graphs whose parts all have size \(1\) or \(2\), with at least one singleton part. They have the form $$ G= K_{\underbrace{1,\ldots,1}_{r}, \underbrace{2,\ldots,2}_{q}}, \qquad\rev{r\in\mathbb{Z}_{\geq1},\ q\in\mathbb{Z}_{\geq0}}, $$ where \(n=r+2q.\)

Thus $$ 0\leq q\leq \left\lfloor\frac{n-1}{2}\right\rfloor, $$ so this family contains \(\left\lfloor\frac{n+1}{2}\right\rfloor\) graphs.

The second and third families overlap precisely when there is exactly one singleton part and every nonsingleton part has size \(2\). Such a graph exists precisely when \(n\) is odd. Therefore $$ \begin{aligned} a_L(n) &= \rev{\sigma_0(n)}-2+\rev{\sigma_0(n-1)}-1+ \left\lfloor\frac{n+1}{2}\right\rfloor - \begin{cases} 1,&n\text{ odd},\\ 0,&n\text{ even}. \end{cases}\\ &= \rev{\sigma_0(n)}+\rev{\sigma_0(n-1)}+ \left\lfloor\frac n2\right\rfloor-3. \end{aligned} $$

We now consider the signless Laplacian. By Theorem~\ref{MultipartiteQClassification}, its classification agrees with the preceding Laplacian classification except for five graphs. The graph \(K_{1,3,3}\) satisfies \(\rev{\mm_L}(G)=\dim(G)\) but not \(\rev{\mm_Q}(G)=\dim(G)\), whereas $$ K_{1,1,4,4},\qquad K_{1,1,3,3,3},\qquad K_{2,6,6},\qquad K_{2,5,5,5} $$ satisfy \(\rev{\mm_Q}(G)=\dim(G)\) but not \(\rev{\mm_L}(G)=\dim(G)\).
Their orders are, respectively, \(7,10,11,14,\) and \(17\). The stated formula for \(a_Q(n)\) follows.

\end{proof}

\begin{corollary}\label{MultipartiteEnumerationComparison} \rev{\rev{For every \(n\in\mathbb{Z}_{\geq 2}\)} with \(n\notin\{7,10,11,14,17\}\),} we have \(a_L(n)=a_Q(n).\) Moreover, among all complete multipartite graphs, the properties $$ \rev{\mm_L}(G)=\dim(G) \qquad\text{and}\qquad \rev{\mm_Q}(G)=\dim(G) $$ differ for exactly five graphs, namely $$ K_{1,3,3},\quad K_{1,1,4,4},\quad K_{1,1,3,3,3},\quad K_{2,6,6},\quad K_{2,5,5,5}. $$ \end{corollary}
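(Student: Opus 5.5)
This follows directly from Theorem~\ref{MultipartiteEnumeration} together with the two classification theorems; I will argue in the following order. First, the equality \(a_L(n)=a_Q(n)\) for \(n\notin\{7,10,11,14,17\}\) is read off from the case formula for \(a_Q(n)\) in Theorem~\ref{MultipartiteEnumeration}: outside the five exceptional orders the formula returns \(a_L(n)\). No further computation is needed for this part.

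For the second assertion I compare the condition lists of Theorem~\ref{MultipartiteClassification} and Theorem~\ref{MultipartiteQClassification}, splitting by \(r\).

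\textbf{Case \(r=0\).} Laplacian condition~(i) says all parts have a common size at least \(2\). This coincides with signless condition~(i), since \(r=0\) forces all parts to be nonsingleton. Laplacian condition~(ii) is vacuous here.

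\textbf{Case \(r=1\).} The Laplacian side is the union of two conditions: all nonsingleton parts have a common size, or every nonsingleton part has size \(2\). The second is contained in the first, since at least one nonsingleton part exists because \(k\ge2\). On the signless side the same condition appears, except that \(K_{1,3,3}\) is removed.

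\textbf{Case \(r\ge2\).} Laplacian condition~(i) fails, and condition~(ii) states that every nonsingleton part has size \(2\). This is exactly signless condition~(iii).

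The only remaining discrepancy is the set of four exceptional graphs in signless condition~(iv). To confirm that each of them fails the Laplacian equality, I check it against Theorem~\ref{MultipartiteClassification}.
\begin{itemize}
\item \(K_{1,1,4,4}\) and \(K_{1,1,3,3,3}\) have \(r=2\) and a part of size larger than \(2\), so both (i) and (ii) fail.
\item \(K_{2,6,6}\) and \(K_{2,5,5,5}\) have \(r=0\) and two distinct nonsingleton sizes, so (i) fails, and (ii) needs \(r\ge1\).
\end{itemize}

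Finally, the five graphs are pairwise non-isomorphic, since their part multisets differ. Hence the symmetric difference of the two equality classes is exactly the listed five graphs. The main point requiring care is the \(r=1\) case, where one must observe that the Laplacian union collapses to the single condition matching signless condition~(ii), so that no other graph is gained or lost.
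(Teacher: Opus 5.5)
Your proof is correct and follows essentially the same route as the paper, which obtains this corollary inside the proof of Theorem~\ref{MultipartiteEnumeration} by comparing Theorems~\ref{MultipartiteClassification} and~\ref{MultipartiteQClassification}: they agree except that \(K_{1,3,3}\) is lost and the four exceptional graphs are gained. Your case split by \(r\) makes that comparison explicit. The first assertion also follows directly from the second, because the five graphs have orders \(7,10,11,14,17\).
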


The preceding enumeration can also be expressed in terms of ordinary
generating functions. Put $$ D(x)=\sum_{n\geq1}\rev{\sigma_0(n)}x^n =\sum_{k\geq1}\frac{x^k}{1-x^k}. $$ Then $$ \begin{aligned} A_L(x) &:=\sum_{n\geq2}a_L(n)x^n\\ &=(1+x)D(x)-x +\frac{x^2(1+x)}{(1-x^2)^2} -\frac{3x^2}{1-x}. \end{aligned} $$

Consequently, $$ \begin{aligned} A_Q(x) &:=\sum_{n\geq2}a_Q(n)x^n\\ &= A_L(x)-x^7+x^{10}+x^{11}+x^{14}+x^{17}. \end{aligned} $$

For reference, the first values are $$ \begin{array}{c|rrrrrrrrrrrr} n &2&3&4&5&6&7&8&9&10&11&12&13\\ \hline a_L(n) &1&2&4&4&6&6&7&8&9&8&11&11\\ a_Q(n) &1&2&4&4&6&5&7&8&10&9&11&11 \end{array} $$

\section{Core--Satellite Graphs}\label{sec:satellite}

We next consider a further class of graphs to which the twin-decomposition framework applies naturally. Following Estrada and Benzi \cite{EstradaBenziCoreSatellite}, \rev{consider \(K_c\vee(\eta K_s)\),} where \(\eta K_s\) denotes the disjoint union of \(\eta\) copies of \(K_s\). These graphs are known as core--satellite graphs. The same family also appears in the literature as Type I generalized windmill graphs \cite{KooijWindmill}.

Their adjacency and Laplacian spectral properties have been studied previously; in particular, the Laplacian spectrum was determined by Estrada and Benzi \cite{EstradaBenziCoreSatellite}. Our purpose here is different: we determine when the maximum Laplacian or signless Laplacian eigenvalue multiplicity equals the metric dimension.

\revblue{When \(c=1\), the graph \(K_1\vee(\eta K_s)\) is obtained from \(\eta\) copies of \(K_{s+1}\) by identifying one vertex from each copy as a single common vertex. This is their vertex-amalgamation. Since \(s\geq2\), the known formula for vertex-amalgamations of complete graphs, recorded by Simanjuntak, Uttunggadewa, and Saputro~\cite{SimanjuntakAmalgamation2015}, gives metric dimension \(\eta\bigl(\dim(K_{s+1})-1\bigr)=\eta(s-1)\). The argument below includes this known case and handles arbitrary \(c\).}

\rev{Throughout this section, \(c\in\mathbb{Z}_{\geq1}\) and \(s,\eta\in\mathbb{Z}_{\geq2}\), and \(G=K_c\vee(\eta K_s)\).}

The excluded cases are already covered by earlier sections: if \(\eta=1\), then \(\rev{K_c\vee K_s}=K_{c+s}\), while if \(s=1\), then \(\rev{K_c\vee(\eta K_1)}=K_c\vee\overline{K_\eta}\) is a complete multipartite graph.

\begin{theorem}\label{CoreSatelliteEquality} \rev{The graph defined above satisfies} \(\dim(G)=(c-1)+\eta(s-1).\) Moreover, \(\rev{\mm_L}(G)=\dim(G)\) if and only if \(c=1,\) whereas \(\rev{\mm_Q}(G)=\dim(G)\) if and only if \(c=1 \qquad\text{or}\qquad (c,s)=(2,2).\) \end{theorem}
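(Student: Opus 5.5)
The plan has three parts: establish the metric dimension through the twin bound plus an explicit resolving set, then read off both spectra from Proposition~\ref{BlowUpFactorization} after one further reduction, and finally check every possible eigenvalue collision.

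\textbf{Metric dimension.} The core \(K_c\) is a true twin class, since every core vertex has closed neighbourhood \(V(G)\). Each satellite copy of \(K_s\) is also a true twin class, because its vertices share the closed neighbourhood consisting of that copy together with the core. Lemma~\ref{TwinLowerBound} therefore gives \(\dim(G)\ge (c-1)+\eta(s-1)\). For the matching upper bound, I take all but one vertex of the core and all but one vertex of each satellite. The omitted vertex of satellite \(j\) is at distance \(1\) from the chosen vertices of its own satellite, which exist since \(s\ge2\), and at distance \(2\) from the chosen vertices of every other satellite. The omitted core vertex is at distance \(1\) from every chosen vertex. Since \(\eta\ge2\), all these representations are distinct, so this set is resolving.

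\textbf{Spectra.} I view \(G\) as the clique blow-up \(H[(c,1),(s,1),\dots,(s,1)]\), where \(H=K_{1,\eta}\) with the core cell at the centre. I apply Proposition~\ref{BlowUpFactorization} and then split \(\widehat B_X\) further by the zero-sum subspace of the \(\eta\) identical satellite coordinates, exactly as in the proof of Theorem~\ref{QQuotientReduction}. What remains is a \(2\times2\) quotient, and I write \(n=c+\eta s\).

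For \(L\), the contributions are:
\begin{itemize}
\item eigenvalue \(n\) with multiplicity \(c-1\) from the core;
\item eigenvalue \(s+c\) with multiplicity \(\eta(s-1)\) from the satellites;
\item eigenvalue \(c\) with multiplicity \(\eta-1\) from the satellite zero-sum subspace;
\item eigenvalues \(0\) and \(n\) from the \(2\times2\) block \(\begin{pmatrix}\eta s&-\sqrt{c\eta s}\\-\sqrt{c\eta s}&c\end{pmatrix}\).
\end{itemize}
The values \(n\), \(s+c\) and \(c\) are distinct because \(\eta,s\ge2\). Hence \(\mm_L(G)=\max\{c,\eta(s-1),\eta-1\}\). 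Since \(\eta(s-1)\ge2\), we have \(c<D:=\dim(G)\), and \(\eta(s-1)=D\) exactly when \(c=1\). This settles the Laplacian statement.

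For \(Q\), the contributions are:
\begin{itemize}
\item eigenvalue \(n-2\) with multiplicity \(c-1\);
\item eigenvalue \(s+c-2\) with multiplicity \(\eta(s-1)\);
\item eigenvalue \(c+2s-2\) with multiplicity \(\eta-1\);
\item the two eigenvalues of \(M=\begin{pmatrix}\eta s+2c-2&\sqrt{c\eta s}\\\sqrt{c\eta s}&c+2s-2\end{pmatrix}\), which are simple because the off-diagonal entry is nonzero.
\end{itemize}

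\textbf{Collision analysis for \(Q\).} This is the main step. The value \(s+c-2\) never equals \(n-2\) or \(c+2s-2\). The values \(n-2\) and \(c+2s-2\) coincide only when \(\eta=2\). For the satellite eigenvalue, the key computation is
\[
\det\bigl((s+c-2)I-M\bigr)=s(\eta-1)(s-c).
\]
So \(s+c-2\) is a root of \(M\) if and only if \(s=c\), in which case its multiplicity becomes \(\eta(s-1)+1\). This equals \(D\) exactly when \(c-1=1\), that is, \((c,s)=(2,2)\). When \(c=1\) and \(s\ne c\), the value \(\eta(s-1)=D\) is attained directly.

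It remains to show that no other eigenvalue reaches \(D\) when \(c\ge2\). Every other multiplicity is at most \(c-1+(\eta-1)+1\) (attained only if \(\eta=2\)), or \(\eta-1+1\), or \(c\). Each of these is below \(D\) except possibly in the triple-collision case \(\eta=s=2\). There \(n-2=c+2\), and \(\det\bigl((c+2)I-M\bigr)=-4c\neq0\), so no triple collision occurs. The bookkeeping in this last case is the step I expect to need the most care, but it reduces to the single determinant just displayed.
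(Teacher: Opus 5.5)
Your proposal is correct and follows the paper's proof: the same twin bound and resolving set, the blow-up quotient split by the satellite zero-sum subspace into the eigenvalue $c+2s-2$ of multiplicity $\eta-1$ plus a $2\times 2$ block, and the key identity $\det\bigl((s+c-2)I-M\bigr)=s(\eta-1)(s-c)$. The differences are minor. You derive the Laplacian spectrum from Proposition~\ref{BlowUpFactorization} instead of citing Estrada--Benzi. You also handle the remaining collisions with a crude ``$+1$'' bound and the single determinant $-4c$ at $\eta=s=2$, whereas the paper computes $\det(\alpha I-R)=-2cs(\eta-1)$ and $\det(\gamma I-R)=-\eta cs$ in general (with $\alpha=n-2$, $\gamma=c+2s-2$, and $R$ equal to your $M$).
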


\begin{proof}

Let \(C\) be the vertex set of the core \(K_c\), and let \(V_1,\ldots,V_\eta\) be the vertex sets of the \(\eta\) satellite copies of \(K_s\).

Each \(V_i\) is a true twin class of size \(s\). Moreover, when \(c\geq2\), the core \(C\) is a true twin class of size \(c\). Hence the twin lower bound gives \(\dim(G)\geq(c-1)+\eta(s-1).\) Conversely, choose all but one vertex from each \(V_i\), and, when \(c\geq2\), all but one vertex from \(C\). Let \(S\) be the resulting set. Then \(|S|=(c-1)+\eta(s-1).\)

Every vertex of \(S\) is distinguished from every other vertex by its own zero coordinate. Let \(u_i\) denote the unselected vertex of \(V_i\).
\rev{For every pair of distinct satellite indices \(i,j\), any} selected vertex of \(V_i\) has distance \(1\) from \(u_i\) and distance \(2\) from \(u_j\), and therefore distinguishes them.
Finally, the unselected core vertex, when it exists, is distinguished from \(u_i\) by any selected vertex in \(V_j\), \(j\ne i\): its distances to
these two vertices are \(1\) and \(2\), respectively. The same argument distinguishes the unique core vertex from every \(u_i\) when \(c=1\).
Thus \(S\) resolves \(G\), and consequently \(\dim(G)=D:=(c-1)+\eta(s-1).\)

We first consider the Laplacian matrix. The Laplacian spectrum of core--satellite graphs is known \cite{EstradaBenziCoreSatellite} and, in
our notation, is $$ \Spec(L(G)) = \left\{0^{(1)}, c^{(\eta-1)}, (c+s)^{(\eta(s-1))}, (c+\eta s)^{(c)} \right\}. $$

Therefore \(\rev{\mm_L}(G) = \max\{\eta-1,\eta(s-1),c\}.\)

If \(c=1\), then \(D=\eta(s-1),\) and hence \(\rev{\mm_L}(G)=D\).

Suppose now that \(c\geq2\). Then \(\eta(s-1)<D.\)

Furthermore, \(D-c=\eta(s-1)-1\geq1\) and \(D-(\eta-1)=c+\eta(s-2)>0.\)

Thus every Laplacian eigenvalue has multiplicity strictly smaller than \(D\). Hence \(\rev{\mm_L}(G)=\dim(G)\) if and only if \(c=1\).

We next consider the signless Laplacian matrix. The core \(C\), when \(c\geq2\), induces the signless Laplacian eigenvalue \(\alpha=c+\eta s-2\) with contribution \(c-1\). Each satellite \(V_i\) induces \(\beta=c+s-2\) with contribution \(s-1\), so the combined contribution of all satellites
to the multiplicity of \(\beta\) is \(\eta(s-1).\)

By Proposition~\ref{BlowUpFactorization}, the symmetric normalized signless Laplacian quotient matrix is $$ \widehat B_Q= \begin{pmatrix} \eta s+2c-2 &
\sqrt{cs} & \sqrt{cs} & \cdots & \sqrt{cs} \\ \sqrt{cs} & c+2s-2 & 0 & \cdots & 0 \\ \sqrt{cs} & 0 & c+2s-2 & \cdots & 0 \\ \vdots & \vdots & \vdots & \ddots & \vdots
\\ \sqrt{cs} & 0 & 0 & \cdots & c+2s-2 \end{pmatrix}. $$

The subspace of vectors whose core coordinate is zero and whose satellite coordinates sum to zero has dimension \(\eta-1\). Hence \(\gamma=c+2s-2\) is a quotient eigenvalue of multiplicity \(\eta-1\).

The two remaining quotient eigenvalues are the eigenvalues of $$ R = \begin{pmatrix} \eta s+2c-2 & \sqrt{\eta cs}\\ \sqrt{\eta cs} & c+2s-2 \end{pmatrix}. $$ Since its off-diagonal entry is nonzero, these two eigenvalues are distinct.

We now determine the possible collisions between the twin-induced and quotient eigenvalues.

First, \(\alpha=\gamma\) if and only if \(\eta=2.\) In this case the multiplicity of \(\alpha\) is \((c-1)+(\eta-1)=c.\)

Next, a direct calculation gives \(\det(\beta I-R) = s(\eta-1)(s-c).\)

Thus \(\beta\) is an eigenvalue of \(R\) if and only if \(c=s.\)

Since the eigenvalues of \(R\) are simple, this collision contributes exactly one additional dimension. Consequently, the multiplicity of \(\beta\) is \(\eta(s-1)+1\) when \(c=s\), and \(\eta(s-1)\) otherwise.

There are no further collisions. Indeed, \(\det(\alpha I-R) = -2cs(\eta-1)\ne0,\) and \(\det(\gamma I-R) = -\eta cs\ne0.\)

Suppose first that \(c=1\). Then \(D=\eta(s-1),\) and \(\beta=s-1\) has multiplicity \(D\). All the remaining multiplicities are smaller, so \(\rev{\mm_Q}(G)=D.\)

Now suppose that \(c\geq2\). If \(c\ne s\), then the multiplicity of \(\beta\) is \(\eta(s-1)=D-(c-1)<D.\)

If \(c=s\), then its multiplicity is \(\eta(s-1)+1 = D-(c-2).\)

Hence it equals \(D\) precisely when \(c=s=2.\)

The eigenvalue \(\alpha\) has multiplicity \(c-1\), except when \(\eta=2\), in which case its multiplicity is \(c\). In either case its multiplicity is strictly smaller than \(D\). Likewise, \(\eta-1<D,\) and the remaining quotient eigenvalues are simple.

Therefore \(\rev{\mm_Q}(G)=\dim(G)\) if and only if \(c=1 \qquad\text{or}\qquad (c,s)=(2,2).\)

\end{proof}

\begin{corollary}\label{CoreSatelliteLQSeparation} \rev{\rev{For every \(\eta\in\mathbb{Z}_{\geq 2}\)}, the graph} \(G=K_2\vee(\eta K_2)\) satisfies \(\rev{\mm_Q}(G)=\dim(G)=\eta+1,\) whereas \(\rev{\mm_L}(G)=\eta<\eta+1=\dim(G).\) \rev{Consequently, \(\{K_2\vee(\eta K_2):\eta\in\mathbb{Z}_{\geq2}\}\) is an infinite family} for which the signless Laplacian equality holds but the Laplacian equality does not. \end{corollary}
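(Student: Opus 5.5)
This corollary is a direct specialization of Theorem~\ref{CoreSatelliteEquality} to \(c=s=2\), so I will substitute these values into the formulas established in its proof and read off the two maximum multiplicities.

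\emph{Metric dimension.} The formula \(\dim(G)=(c-1)+\eta(s-1)\) gives \(\dim(G)=1+\eta=\eta+1\).

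\emph{Laplacian side.} I will use the Laplacian spectrum recorded in the proof of Theorem~\ref{CoreSatelliteEquality},
\[
\Spec(L(G))=\left\{0^{(1)},\,2^{(\eta-1)},\,4^{(\eta)},\,(2+2\eta)^{(2)}\right\}.
\]
The eigenvalues \(2\) and \(4\) are distinct, so no multiplicities merge there. One check is needed: \(2+2\eta\) could coincide with \(4\) only if \(\eta=1\), which is excluded by \(\eta\geq2\). Hence \(\mm_L(G)=\max\{\eta-1,\eta,2\}=\eta\), because \(\eta\geq2\). This is strictly less than \(\eta+1=\dim(G)\).

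\emph{Signless Laplacian side.} The twin-induced eigenvalue is \(\beta=c+s-2=2\), with contribution \(\eta(s-1)=\eta\). The collision criterion \(c=s\) holds, so the simple quotient eigenvalue of \(R\) adds one dimension, giving \(\mult_{Q(G)}(2)=\eta+1\). The proof of the theorem already shows that all other eigenvalues have multiplicity strictly below \(D\). So \(\mm_Q(G)=\eta+1=\dim(G)\); equivalently, this is exactly the case \((c,s)=(2,2)\) of the theorem.

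The infinite-family assertion follows because \(\eta\) ranges over \(\mathbb{Z}_{\geq2}\) and the graphs are pairwise nonisomorphic, having distinct orders \(2+2\eta\). I expect no step to be a real obstacle; the only point requiring care is confirming that no eigenvalue coincidences inflate \(\mm_L\), which the check that \(2+2\eta\neq4\) for \(\eta\ge2\) settles.
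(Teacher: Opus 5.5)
Your proposal is correct and follows the paper's route: the corollary is the specialization \((c,s)=(2,2)\) of Theorem~\ref{CoreSatelliteEquality}. From the Laplacian spectrum you read off \(\mm_L=\max\{\eta-1,\eta,2\}=\eta\), and from the collision \(c=s\) of \(\beta\) with an eigenvalue of \(R\) you obtain \(\mult_{Q(G)}(2)=\eta+1\); your extra checks (\(2+2\eta\neq4\) for \(\eta\geq2\), and distinct orders giving infinitely many graphs) only make explicit what the paper leaves implicit.
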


\begin{remark} \rev{When \(c=1\)}, Theorem~\ref{CoreSatelliteEquality} gives \(\rev{\mm_L}(G)=\rev{\mm_Q}(G)=\dim(G)=\eta(s-1).\) \end{remark}

\section{Core--Leaf Graphs}\label{sec:leaf}

Fix a connected graph \(H\) with exactly \(h\) vertices \(v_1,\ldots,v_h\), where \(h\in\mathbb{Z}_{\geq1}\). For every choice of \(p_1,\ldots,p_h\in\mathbb{Z}_{\geq2}\), let \(G=G(H;p_1,\ldots,p_h)\) be obtained by adding \(p_i\) new vertices adjacent only to \(v_i\), for every \(i\in\{1,\ldots,h\}\). Such new vertices are called pendant leaves. The \emph{core} is the entire starting graph \(H\), regarded as the induced subgraph on the original vertices of \(G\). Leaves are therefore attached to every vertex of \(H\), not merely to a selected part of it. These assumptions and the notation \(G\) are retained throughout this section\revblue{, except for the explicitly stated relaxation in Proposition~\ref{CoreLeafOneSingleLeaf}}.

The metric dimension of graphs with pendant edges was studied by
Iswadi et al.~\cite[Theorem~5]{Iswadi2008}; their result gives
\[
\dim(G(H;p_1,\ldots,p_h))
=
\sum_{i=1}^{h}(p_i-1)
.
\]

For a rooted-product extension, see Farhan and Baskoro~\cite[Proposition~3.3]{FarhanBaskoroFurther2026}. Faria~\cite{Faria1985} established the pendant/quasi-pendant lower bound for the multiplicity of the Laplacian eigenvalue \(1\). Andrade et al.~\cite{AndradeCardosoPastenRojo2015} proved, for both \(L\) and \(Q\), that the multiplicity is exactly the number of pendant vertices minus the number of their distinct neighbors when every nonpendant vertex is adjacent to a pendant vertex. This applies here and gives \(\mult_{L(G)}(1)=\mult_{Q(G)}(1)=\sum_i(p_i-1)\). We include a self-contained verification, then bound every other multiplicity to show maximality and, except for \(P_3\), uniqueness of the maximum.

\begin{theorem}\label{CoreLeafEquality}
\rev{The graph constructed above satisfies}

\[
\rev{\mm_L}(G)=\rev{\mm_Q}(G)=\dim(G)
=\sum_{i=1}^{h}(p_i-1),
\]
\rev{and the maximum multiplicity of each of \(L(G)\) and \(Q(G)\) is attained at the eigenvalue \(1\).}
\end{theorem}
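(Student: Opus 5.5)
The plan is to apply Proposition~\ref{ExactMultiplicity} to the twin partition of \(G\), and to control the quotient \(B_X\) directly through its eigenvector equations rather than through its order. For every \(i\), let \(P_i\) denote the set of the \(p_i\) leaves attached to \(v_i\).

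\emph{Step 1: the twin classes.} The nontrivial twin classes of \(G\) are exactly the sets \(P_i\).
\begin{itemize}
\item Each \(P_i\) is a false twin class, since all its vertices have neighborhood \(\{v_i\}\).
\item No core vertex \(v_i\) is a twin of anything. It cannot be a twin of a leaf, because \(v_i\) has at least two leaf neighbors while a leaf has only one neighbor. It cannot be a twin of another core vertex \(v_j\), because \(v_i\) has private leaves that are not adjacent to \(v_j\).
\end{itemize}
Hence \(\te(G)=\sum_i(p_i-1)\). Every \(P_i\) has degree \(1\) and is a false twin class, so by the table in Proposition~\ref{ExactMultiplicity} it induces \(\theta^L=\theta^Q=1\). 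Therefore \(\tm_L(G)=\tm_Q(G)=\te(G)\), and all twin contributions sit at the single eigenvalue \(1\). The metric dimension \(\dim(G)=\sum_i(p_i-1)\) is taken from Iswadi et al.~\cite{Iswadi2008}. Alternatively, Lemma~\ref{TwinLowerBound} gives the lower bound, and a short resolving-set check would give the upper bound.

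\emph{Step 2: eigenvectors of the quotient.} The twin partition has \(2h\) cells, \(\{v_i\}\) and \(P_i\). I would work with the row-sum quotient, which is similar to \(B_X\) and so has the same eigenvalue multiplicities. Let \((x_1,\ldots,x_h,y_1,\ldots,y_h)\) be an eigenvector for \(\lambda\), where \(x_i\) is the value on \(v_i\) and \(y_i\) the common value on \(P_i\). The leaf row gives \(y_i\mp x_i=\lambda y_i\), with the minus sign for \(L\) and the plus sign for \(Q\). Equivalently,
\[
x_i=\pm(1-\lambda)y_i .
\]
\begin{itemize}
\item If \(\lambda=1\), this forces every \(x_i=0\). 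The row of \(v_i\) then reduces to \(\mp p_iy_i=0\), so every \(y_i=0\). Hence \(1\notin\Spec(B_X)\), and \(\mult_{X(G)}(1)=\sum_i(p_i-1)\) exactly.
\item If \(\lambda\neq1\), each \(y_i\) is determined by \(x_i\). So the projection \((x,y)\mapsto x\) is injective on the \(\lambda\)-eigenspace of the quotient, and \(\mult_{B_X}(\lambda)\le h\).
\end{itemize}

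\emph{Step 3: conclusion.} For \(\lambda\neq1\), no twin class contributes, so Proposition~\ref{ExactMultiplicity} and Step~2 give
\[
\mult_{X(G)}(\lambda)=\mult_{B_X}(\lambda)\le h\le\sum_i(p_i-1),
\]
where the last inequality uses \(p_i\ge2\). Thus the maximum multiplicity of both \(L(G)\) and \(Q(G)\) equals \(\sum_i(p_i-1)=\dim(G)\), and it is attained at the eigenvalue \(1\).

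The only delicate point is the bound on quotient multiplicities. A dimension count alone fails, because the quotient has order \(2h\) while \(\tm_X(G)=h\) when all \(p_i=2\). The leaf-row relation \(x_i=\pm(1-\lambda)y_i\) is exactly what halves the count. For the later uniqueness claim, I would then sharpen \(h\le\sum_i(p_i-1)\) to a strict inequality outside \(P_3\). This means showing that the \(x\)-projection cannot be onto \(\mathbb R^h\) when all \(p_i=2\) and \(h\ge2\); for instance, \(x=\mathbf 1\) gives a row equation for \(v_i\) that depends on \(\deg_H(v_i)\) in a way that cannot hold for all \(x\).
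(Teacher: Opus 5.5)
Your proposal is correct and is essentially the paper's proof: the leaf classes put \(\ell=\sum_i(p_i-1)\) at the eigenvalue \(1\), the leaf and core equations exclude \(1\) from the quotient, and for \(\lambda\ne1\) the leaf equations determine an eigenvector from its \(h\) core coordinates, so \(\mult\le h\le\ell\). Two small differences: the paper phrases this last step directly for eigenvectors of \(X(G)\) rather than through \(B_X\), and, since it notes that the cited result of Iswadi et al.\ is stated for \(h\ge2\), it also writes out the short resolving-set check that you left implicit.

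One caution on your closing aside, which lies outside this statement: as written, the test vector \(x=\mathbf 1\) does not settle uniqueness. For \(L\) the \(\deg_H\) terms cancel because \(L(H)\mathbf 1=0\). For \(Q\), a regular core with equal leaf counts satisfies the resulting condition at some \(\lambda\ne1\). The working argument, as in Remark~\ref{CoreLeafUniqueMaximum}, is that any edge of \(H\) gives a nonzero off-diagonal entry of the reduced \(h\times h\) matrix, so its nullity is at most \(h-1\).
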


\begin{proof}
The metric-dimension formula is known from
\cite[Theorem~5]{Iswadi2008} for \(h\geq2\); we include a short proof
valid also for \(h=1\). Let \(C_i\) be the leaves at \(v_i\), choose
\(S\) by omitting exactly one leaf \(u_i\) from each \(C_i\), and put
\(\rev{\ell}=\sum_i(p_i-1)\). The twin bound gives \(\dim(G)\geq \rev{\ell}\).
Choose \(x_i\in S\cap C_i\), which is possible since \(p_i\geq2\).
Distinct core vertices \(v_i,v_j\) are distinguished by \(x_i\), and
distinct omitted leaves \(u_i,u_j\) are distinguished by \(x_i\).
A core vertex \(v_j\) and any omitted leaf \(u_i\) are distinguished
by \(x_j\): their distances are \(1\) and \(2\) if \(i=j\), and
\(1\) and \(d_H(v_i,v_j)+2\) otherwise.
Vertices in \(S\) are distinguished by their zero coordinates.
Thus \(S\) resolves \(G\), and \(\dim(G)=\rev{\ell}\).

For each \(i\), the \(p_i\) leaves adjacent to \(v_i\) form a false
twin class of degree \(1\). Hence \(1\) is an eigenvalue of both
\(L(G)\) and \(Q(G)\) with multiplicity at least \(\rev{\ell}\).

Let \(\mathcal U\) be the subspace of vectors that are constant on
each leaf class. Write \(a_i\) for the common coordinate on the leaves
adjacent to \(v_i\) and \(b_i\) for the coordinate at \(v_i\). If
\(L(G)x=x\), then the leaf and core equations give successively
\(b_i=0\) and \(p_i a_i=0\). The same conclusion follows from
\(Q(G)x=x\). Thus \(1\) does not occur in either restriction to
\(\mathcal U\), and therefore
\[
\mult_{L(G)}(1)=\mult_{Q(G)}(1)=\rev{\ell}.
\]

\rev{For every eigenvalue \(\lambda\ne1\), the leaf equations} give
\[
a_i=\frac{b_i}{1-\lambda}\quad\text{for }L(G),
\qquad
a_i=-\frac{b_i}{1-\lambda}\quad\text{for }Q(G).
\]
Thus every such eigenvector is determined by its \(h\) core
coordinates, so every eigenvalue different from \(1\) has
multiplicity at most \(h\). Since \(p_i\geq2\),
\[
\rev{\ell}=\sum_{i=1}^{h}(p_i-1)\geq h.
\]
Therefore \(1\) has maximum multiplicity for both matrices, and
\(
\rev{\mm_L}(G)=\rev{\mm_Q}(G)=\rev{\ell}=\dim(G).
\)
\end{proof}

\begin{remark}\label{CoreLeafUniqueMaximum}
For every core of order \(h\in\mathbb{Z}_{\geq2}\), and every permitted choice of the integer leaf counts, the eigenvalue \(1\) is uniquely of maximum multiplicity for each of \(L(G)\) and \(Q(G)\). To see this, fix \(X\in\{L,Q\}\) and an eigenvalue \(\lambda\ne1\). Put \(\ell=\sum_i(p_i-1)\), and let \(b=(b_1,\ldots,b_h)^T\) record an eigenvector's coordinates at the core vertices. Each coordinate at an attached leaf is determined by the coordinate at its neighbor in \(H\), by the leaf equations in the preceding proof. Substituting these values into the core equations gives
\[
\left(X(H)+\operatorname{diag}\left(
p_i-\lambda-\frac{p_i}{1-\lambda}:i\in\{1,\ldots,h\}
\right)\right)b=0.
\]
This matrix has a nonzero off-diagonal entry because \(H\) is connected and \(h\geq2\). Hence its rank is at least one and its nullity, the dimension of its kernel, is at most \(h-1\). The map from the original eigenspace to \(b\) is injective, so \(\mult_{X(G)}(\lambda)\leq h-1<\ell\). For the one-vertex core, the same uniqueness holds \rev{for every \(p_1\in\mathbb{Z}_{\geq 3}\)}. When \(p_1=2\), the graph is \(P_3\), and every eigenvalue of either matrix has multiplicity one.
\end{remark}

\renewcommand{\thetheorem}{\textcolor{black}{\arabic{theorem}}}
\begin{proposition}\label{CoreLeafOneSingleLeaf}
Let \(H\) be a connected graph with vertices \(v_1,\ldots,v_h\), where
\(h\in\mathbb{Z}_{\geq3}\). Choose \(s\in\{1,\ldots,h\}\) such that
\(\deg_H(v_s)\geq2\). Set \(p_s=1\), and choose
\(p_i\in\mathbb{Z}_{\geq2}\) for every \(i\ne s\).
Let \(G\) be obtained by attaching \(p_i\) pendant leaves to each \(v_i\),
and put \(\ell=\sum_{i=1}^{h}(p_i-1)\). Then
\[
\dim(G)=\mm_L(G)=\mm_Q(G)=\ell.
\]
For each \(X\in\{L,Q\}\), the eigenvalue \(1\) has multiplicity \(\ell\)
and is the unique eigenvalue of maximum multiplicity. More precisely,
every eigenvalue \(\lambda\ne1\) of \(X(G)\) has multiplicity at most
\(h-2<\ell\).
\end{proposition}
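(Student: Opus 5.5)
The plan is to prove the three claims in order: the metric dimension, the multiplicity of the eigenvalue \(1\), and the bound \(h-2\) on every other multiplicity. Write \(w\) for the unique leaf at \(v_s\), and \(C_i\) for the leaf class at \(v_i\) when \(i\ne s\).

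\textbf{Metric dimension.} For \(i\ne s\), the set \(C_i\) is a false twin class of size \(p_i\), and \(\{w\}\) is not a nontrivial twin class. So Lemma~\ref{TwinLowerBound} gives \(\dim(G)\geq\ell\).

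For the upper bound I take \(S\) to be all leaves except one omitted leaf \(u_i\in C_i\) for each \(i\ne s\). I fix \(x_i\in S\cap C_i\). The pairs of unselected vertices are handled as in Theorem~\ref{CoreLeafEquality}, with these additional cases involving \(w\) and \(v_s\):
\begin{itemize}
\item \(w\) against \(v_j\) is separated by \(x_j\) when \(j\ne s\) (distances \(d_H(v_j,v_s)+2\) and \(1\)). For \(j=s\), any \(x_k\) with \(k\ne s\) gives distances \(d+2\) and \(d+1\).
\item \(w\) against \(u_i\) is separated by \(x_i\) (distances at least \(3\) and exactly \(2\)).
\item \(v_s\) against \(v_j\) is separated by \(x_j\) (distances \(\geq2\) and \(1\)).
\item \(v_s\) against \(u_i\) is the delicate pair. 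If \(v_i\not\sim v_s\), then \(x_i\) works, since the distances are at least \(3\) and exactly \(2\). If \(v_i\sim v_s\), I use \(x_k\) with \(k\notin\{i,s\}\), which exists because \(h\geq3\). Equal distances would force \(d_H(v_k,v_s)=d_H(v_k,v_i)+2\), which is impossible because \(v_i\sim v_s\).
\end{itemize}

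\textbf{Multiplicity of \(1\).} Proposition~\ref{ExactMultiplicity} gives multiplicity at least \(\ell\) from the classes \(C_i\). On the quotient \(\mathcal U\), the argument of Theorem~\ref{CoreLeafEquality} applies verbatim, since the leaf equation at \(w\) still forces \(b_s=0\). So \(\mult_{L(G)}(1)=\mult_{Q(G)}(1)=\ell\).

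\textbf{Eigenvalues \(\lambda\ne1\).} As in Remark~\ref{CoreLeafUniqueMaximum}, the leaf equations express every leaf coordinate in terms of its core neighbour. Hence eigenvectors inject into \(\ker M(\lambda)\), where
\[
M(\lambda)=X(H)+\operatorname{diag}(f_X(p_i,\lambda):i\in\{1,\ldots,h\}),
\]
with \(f_L(p,\lambda)=p-\lambda-p/(1-\lambda)\) and \(f_Q(p,\lambda)=p-\lambda+p/(\lambda-1)\). It suffices to show \(\rank M(\lambda)\geq2\). Since \(h\geq3\), this gives multiplicity at most \(h-2\). Also \(\ell\geq h-1>h-2\), because every \(p_i\geq2\) for \(i\ne s\).

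Rank \(0\) is excluded because \(H\) is connected with an edge. Suppose \(M=c\,yy^T\) with \(c\ne0\). Every edge of \(H\) gives a nonzero entry, so \(y\) is nowhere zero. Then every off-diagonal entry is nonzero, so \(H\) is complete with all off-diagonal entries equal to \(\mp1\). From three mutually adjacent vertices, the ratio \((c y_sy_j)(c y_sy_k)/(c y_jy_k)\) gives \(c y_s^2=-1\) for \(L\) and \(+1\) for \(Q\). Hence every diagonal entry equals \(\mp1\).

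The diagonal is \(h-1+f_X(p_i,\lambda)\), and \(f_X(p,\lambda)\) is affine in \(p\) with slope \(-\lambda/(1-\lambda)\). Since \(p_s=1\ne p_i\), constancy of the diagonal forces \(\lambda=0\). Then the diagonal equals \(h-1\geq2\), which contradicts \(\mp1\).

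I expect this rank-\(\geq2\) step to be the main obstacle. Note that it uses both hypotheses: \(h\geq3\), and the mismatch \(p_s=1\) against \(p_i\geq2\). The assumption \(\deg_H(v_s)\geq2\) is used in the metric part (through the existence of \(k\)) and is consistent with the triangle argument above.
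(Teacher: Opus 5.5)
There is a genuine gap, in the metric-dimension upper bound. It sits in the delicate case \(v_s\) versus \(u_i\) with \(v_i\sim v_s\).

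\textbf{The failing step.} Your computation is off by one. You have \(d_G(x_k,v_s)=1+d_H(v_k,v_s)\) and \(d_G(x_k,u_i)=2+d_H(v_k,v_i)\). So equal distances mean \(d_H(v_k,v_s)=d_H(v_k,v_i)+1\), not \(+2\). This is entirely compatible with \(v_i\sim v_s\): it holds whenever \(v_k\) is strictly closer to \(v_i\) than to \(v_s\). For instance, take \(H=P_3\) with path \(v_1v_2v_3\), and let \(s=1\), \(i=2\), \(k=3\). Then both distances equal \(3\).

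\textbf{The degree hypothesis is indispensable.} The problem is not merely a poor choice of \(k\). The example above satisfies every hypothesis except \(\deg_H(v_s)\geq2\). In it, every leaf at \(v_2\) or \(v_3\) is equidistant from \(v_1\) and \(u_2\). The twin bound forces a resolving set of size \(\ell\) to consist of exactly such leaves, so \(\dim(G)>\ell\). Your argument uses only \(h\geq3\), since the existence of \(k\notin\{i,s\}\) follows from \(h\geq3\) and not from the degree condition. So it cannot be correct, and your closing remark misattributes where \(\deg_H(v_s)\geq2\) is used.

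\textbf{The fix.} Choose \(k\) so that \(v_k\) is a neighbor of \(v_s\) different from \(v_i\); this exists precisely because \(\deg_H(v_s)\geq2\). Then \(d_G(x_k,v_s)=2\), whereas \(d_G(x_k,u_i)=2+d_H(v_k,v_i)\geq3\). This is the paper's argument.

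\textbf{A wording slip.} As written, your \(S\) (``all leaves except \(\ldots\)'') contains \(w\) and so has size \(\ell+1\). You clearly mean \(\bigcup_{i\ne s}(C_i\setminus\{u_i\})\), as your case analysis treats \(w\) as unselected.

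\textbf{The spectral part is correct.} It follows the paper: the multiplicity of \(1\) from the twin classes and the quotient, the injection of eigenvectors into \(\ker M(\lambda)\), and the exclusion of rank one via \(H=K_h\), the three-term ratio, and the affine dependence on \(p\). Your final contradiction (every diagonal entry equals \(h-1\geq2\ne\mp1\)) is a harmless variant of the paper's rank count for \(X(K_h)\). Incidentally, your \(f_L\) and \(f_Q\) are the same function.
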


\begin{proof}
Let \(C_i\) be the set of leaves attached to \(v_i\), choose \(u_i\in C_i\),
and let \(S=\bigcup_{i=1}^{h}(C_i\setminus\{u_i\})\).
The twin bound gives \(\dim(G)\geq\ell=|S|\).
For every \(j\ne s\), choose \(x_j\in C_j\setminus\{u_j\}\). For all
\(i\in\{1,\ldots,h\}\),
\[
d_G(x_j,v_i)=1+d_H(v_j,v_i),\qquad
d_G(x_j,u_i)=2+d_H(v_j,v_i).
\]
Distinct core vertices are distinguished by \(x_j\) at one of their
indices different from \(s\); the same argument distinguishes distinct
omitted leaves. Also, \(x_j\) distinguishes \(v_j\) from every omitted
leaf when \(j\ne s\), and any \(x_j\) distinguishes \(v_s\) from \(u_s\).
It remains to distinguish \(v_s\) from \(u_i\) for \(i\ne s\).
If \(d_H(v_s,v_i)\geq2\), then \(x_i\) has distance at least \(3\) from
\(v_s\) and distance \(2\) from \(u_i\). Otherwise, choose a neighbor
\(v_j\ne v_i\) of \(v_s\), using \(\deg_H(v_s)\geq2\). Now
\(d_G(x_j,v_s)=2\), whereas \(d_G(x_j,u_i)\geq3\).
Pairs meeting \(S\) are distinguished by a zero coordinate.
Thus \(S\) resolves \(G\), and \(\dim(G)=\ell\).

Fix \(X\in\{L,Q\}\). The leaf equations for an eigenvector of eigenvalue
\(1\) force all core coordinates to vanish. The core equations then force
the coordinates in each \(C_i\) to sum to zero. These conditions also
suffice, so \(\mult_{X(G)}(1)=\ell\).

For \(\lambda\ne1\), the leaf equations determine all leaf coordinates
from the core coordinates \(b=(b_1,\ldots,b_h)^T\).
Substitution into the core equations gives
\[
R_X(\lambda)b=0,\qquad
R_X(\lambda)=X(H)+\operatorname{diag}\left(
p_i-\lambda-\frac{p_i}{1-\lambda}:i\in\{1,\ldots,h\}\right).
\]
Restriction to the core coordinates is injective, so it suffices to prove
\(\rank R_X(\lambda)\geq2\).
The rank is nonzero because \(H\) has an edge. Suppose that the rank is
one, and write \(R=R_X(\lambda)=czz^T\), where \(c\ne0\).
Every vertex of \(H\) has a neighbor, so every coordinate of \(z\) is
nonzero. Consequently all off-diagonal entries are nonzero, forcing
\(H=K_h\). These entries are all equal to
\(\varepsilon\), where \(\varepsilon=-1\) for \(X=L\) and
\(\varepsilon=1\) for \(X=Q\). Since \(h\geq3\), the rank-one identity
\(R_{ii}=R_{ij}R_{ik}/R_{jk}\), for distinct \(i,j,k\), shows that
every diagonal entry also equals \(\varepsilon\).
Comparing the diagonal entries at \(s\) and any \(i\ne s\) now gives
\[
0=(p_s-p_i)\left(1-\frac{1}{1-\lambda}\right)
=-\frac{\lambda(p_s-p_i)}{1-\lambda}.
\]
Since \(p_s\ne p_i\), we obtain \(\lambda=0\). But
\(R_X(0)=X(K_h)\), whose rank is \(h-1\) for \(X=L\) and \(h\) for
\(X=Q\), a contradiction.
Thus every other multiplicity is at most \(h-2\), while
\(\ell\geq h-1\). This proves the asserted equalities and uniqueness.
\end{proof}

\subsection{Enumeration of core--leaf equality graphs}

The family considered in Theorem~\ref{CoreLeafEquality} also admits a natural enumerative description. We restrict attention to the graphs arising in that theorem, \rev{in which the whole connected starting graph is the core and at least two pendant leaves are added at every one of its vertices.}

\rev{\rev{For every \(n\in\mathbb{Z}_{\geq 3}\)}, let \(\mathcal B_n\) be the set of isomorphism classes of graphs of order \(n\) from this construction, allowing all connected cores and all permitted leaf counts, and put \(b_n=|\mathcal B_n|\).}

By Theorem~\ref{CoreLeafEquality}, every graph counted by \(b_n\) satisfies \(\rev{\mm_L}(G)=\rev{\mm_Q}(G)=\dim(G).\)

First fix a core \(H\). Its automorphism group \(\operatorname{Aut}(H)\) consists of the adjacency-preserving permutations of \(V(H)\); its action on vertices is \((\sigma,v)\mapsto\sigma(v)\). For every automorphism \(\sigma\) and every \(j\in\{1,\ldots,h\}\), let \(c_j(\sigma)\) count its permutation cycles of length \(j\), including fixed vertices as cycles of length one. Define the \emph{cycle-index polynomial}
\[
Z_H(z_1,\ldots,z_h)=\frac{1}{|\operatorname{Aut}(H)|}
\sum_{\sigma\in\operatorname{Aut}(H)}\prod_{j=1}^h z_j^{c_j(\sigma)}.
\]
\rev{For every \(n\in\mathbb{Z}_{\geq 3h}\)}, let \(b_{H,n}\) count the isomorphism classes of order \(n\) obtained from this fixed core.

\begin{theorem}\label{CoreLeafUnlabeledEnumeration} \rev{For the fixed core and the counts just defined,} $$ B_H(x):=\sum_{n\geq3h}b_{H,n}x^n = \frac{x^{3h}}{|\operatorname{Aut}(H)|} \sum_{\sigma\in\operatorname{Aut}(H)} \prod_{j\geq1} (1-x^j)^{-c_j(\sigma)}. $$ \rev{Equivalently, using the cycle-index polynomial defined above,} $$ B_H(x) = x^{3h} \rev{Z_H} \left( \frac{1}{1-x}, \frac{1}{1-x^2}, \ldots, \frac{1}{1-x^h} \right). $$ Consequently, $$ B(x):=\sum_{n\geq3}b_nx^n = \rev{\sum_{[H]}} B_H(x), $$ where the sum ranges over isomorphism classes of connected graphs. \end{theorem}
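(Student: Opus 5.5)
The plan is to reduce isomorphism classes of core--leaf graphs with a fixed core \(H\) to orbits of leaf-count vectors under \(\operatorname{Aut}(H)\), and then count those orbits with Burnside's lemma.

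\textbf{Step 1 (the core is recoverable).} In \(G=G(H;p_1,\ldots,p_h)\), every core vertex has degree at least \(p_i+1\geq3\), while every leaf has degree \(1\). The case \(h=1\) needs a separate check: the core vertex has degree \(p_1\geq2\), so it is still distinguished from the leaves. Hence the core is exactly the set of non-leaf vertices, and \(p_i\) is the number of leaves adjacent to \(v_i\).

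\textbf{Step 2 (isomorphisms correspond to automorphisms of \(H\)).} Any isomorphism \(G(H;p)\to G(H;p')\) maps leaves to leaves and so restricts to an automorphism \(\sigma\) of \(H\) with \(p'_{\sigma(i)}=p_i\). Conversely, such a \(\sigma\) extends to an isomorphism by matching the leaf sets bijectively. Therefore \(b_{H,n}\) is the number of \(\operatorname{Aut}(H)\)-orbits on the vectors \(p\in\mathbb Z_{\geq2}^h\) with \(h+\sum p_i=n\). Substituting \(q_i=p_i-2\geq0\) turns the order condition into \(n=3h+\sum q_i\).

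\textbf{Step 3 (Burnside).} Burnside's lemma gives
\[
b_{H,n}=\frac{1}{|\operatorname{Aut}(H)|}\sum_{\sigma\in\operatorname{Aut}(H)}\bigl|\mathrm{Fix}_n(\sigma)\bigr|.
\]
A vector \(q\) is fixed by \(\sigma\) exactly when it is constant on each cycle of \(\sigma\). Each cycle of length \(j\) contributes a free value \(m\geq0\) and adds \(jm\) to the total, so its generating function is \((1-x^j)^{-1}\). Multiplying over all cycles, shifting by \(x^{3h}\), and averaging over \(\operatorname{Aut}(H)\) gives the first formula. The cycle-index form follows by substituting \(z_j=1/(1-x^j)\).

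\textbf{Step 4 (summing over cores).} Graphs built on non-isomorphic cores are non-isomorphic, because by Step 1 the core is determined up to isomorphism as the subgraph induced by the non-leaves. So \(\mathcal B_n\) is the disjoint union over core classes \([H]\). Only cores with \(3h\leq n\) contribute to a given coefficient, so the sum is locally finite and \(B(x)=\sum_{[H]}B_H(x)\) is a valid formal power series.

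The main obstacle is Step 1, in particular making the \(h=1\) and \(P_3\)-type small cases airtight. Once the core is shown to be canonically determined, the rest is routine.
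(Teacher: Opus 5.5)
Your proposal is correct and follows essentially the same route as the paper. You recover the core as the set of non-pendant vertices, identify isomorphism classes with $\operatorname{Aut}(H)$-orbits of leaf-count assignments, and apply Burnside's lemma cycle by cycle; your substitution $q_i=p_i-2$ is just a repackaging of the paper's per-cycle factor $x^{2j}/(1-x^j)$ combined with $x^h$.
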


\begin{proof}

Since at least two pendant vertices are attached to every core vertex, the vertices of the core are precisely the nonpendant vertices of the resulting graph. \rev{Thus deleting every degree-one vertex of \(G\) leaves precisely the induced copy of \(H\). This is what it means here for the core to be uniquely recoverable, up to isomorphism.}

For a fixed \(H\), an element of the family is therefore determined,
up to isomorphism, by a function \(p:V(H)\longrightarrow\{2,3,\ldots\},\) where \(p(v)\) is the number of pendant leaves attached to \(v\), \rev{modulo the action \((\sigma\cdot p)(v)=p(\sigma^{-1}(v))\). Thus two assignments are identified exactly when an automorphism of \(H\) carries one to the other.}

We apply Burnside's lemma. Suppose that \(\sigma\in\operatorname{Aut}(H)\). An assignment \(p\) is fixed by \(\sigma\) precisely when it is constant on every cycle of \(\sigma\). \rev{For every permutation cycle of length \(j\), assigning a common leaf count \(p\in\mathbb{Z}_{\geq2}\) contributes} $$ \sum_{p\geq2}x^{jp} = \frac{x^{2j}}{1-x^j}. $$

Hence the generating function for assignments fixed by \(\sigma\) is $$ x^h \prod_{j\geq1} \left( \frac{x^{2j}}{1-x^j} \right)^{c_j(\sigma)}. $$

Since $$ \sum_{j\geq1}j\,c_j(\sigma)=h, $$ this simplifies to $$ x^{3h} \prod_{j\geq1}(1-x^j)^{-c_j(\sigma)}. $$

Averaging over \(\operatorname{Aut}(H)\) gives the claimed formula by Burnside's lemma. Summing over all connected cores gives \(B(x)\).

\end{proof}

For a core of order \(h\), every resulting graph has
\(n=h+\sum_i p_i\) and \(\ell=\dim(G)=\sum_i(p_i-1)=n-2h\).
Let \(b_H(n,\ell)\) count the resulting isomorphism classes of order \(n\) and metric dimension \(\ell\) for the fixed core \(H\). Its generating function is
\[
B_H(x,y)=\sum_{n,\ell}b_H(n,\ell)x^ny^\ell
=y^{-2h}B_H(xy)
=\frac{x^{3h}y^h}{|\operatorname{Aut}(H)|}
\sum_{\sigma\in\operatorname{Aut}(H)}
\prod_{j\geq1}(1-(xy)^j)^{-c_j(\sigma)}.
\]
For the fixed graph \(H\), this is a change of variables in Theorem~\ref{CoreLeafUnlabeledEnumeration}, not a separate enumeration. When cores of different orders are combined, the refinement is useful because \((n,\ell)\) determines \(h=(n-\ell)/2\). Let \(b(n,\ell)\) count these classes over all connected cores. Accordingly, put
\[
B(x,y)=\sum_{n,\ell}b(n,\ell)x^ny^\ell
=\rev{\sum_{[H]}}B_H(x,y).
\]

We next obtain an asymptotic form of the refined enumeration. \rev{\rev{For every \(h\in\mathbb{Z}_{\geq1}\)}, let \(c_h\) denote} the number of connected nonisomorphic graphs of order \(h\). \rev{A graph is asymmetric when its only automorphism is the identity. Saying that almost all unlabelled graphs have a property means that the fraction of isomorphism classes of order \(h\) having that property tends to one as the integer \(h\) tends to infinity. In this precise sense, almost all unlabelled graphs are asymmetric; see Babai~\cite[discussion preceding Theorem~1.20]{Babai1995}, which explicitly distinguishes the labelled and unlabelled models. Almost all unlabelled graphs are also connected~\cite{Wright1975}. Consequently, among connected unlabelled graphs of order \(h\), the proportion with a nonidentity automorphism tends to zero as \(h\to\infty\). This fact bounds the relative counting error in the proof of Theorem~\ref{CoreLeafAsymptotic}.}

\begin{theorem}\label{CoreLeafAsymptotic}

For every \(h\in\mathbb{Z}_{\geq1}\) and every \(\ell\in\mathbb{Z}_{\geq h}\), put \(n=\ell+2h\). As \(h\to\infty\),
\[
b(n,\ell)\sim c_h\binom{\ell-1}{h-1}.
\]
The symbol \(\sim\) means that the ratio tends to one. The estimate is uniform over \(\ell\geq h\): for every real \(\varepsilon>0\), there is an integer \(h_0\) such that, \rev{for every \(h\in\mathbb{Z}_{\geq h_0}\)} and every \(\ell\in\mathbb{Z}_{\geq h}\),
\[
\left|\frac{b(\ell+2h,\ell)}{c_h\binom{\ell-1}{h-1}}-1\right|<\varepsilon.
\]

\end{theorem}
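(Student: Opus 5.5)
Since \((n,\ell)\) determines \(h=(n-\ell)/2\), only cores of order \(h\) contribute to \(b(\ell+2h,\ell)\). We therefore sum \(b_H(\ell+2h,\ell)\) over the \(c_h\) isomorphism classes of connected graphs \(H\) of order \(h\) and compare each term with the labelled count \(\binom{\ell-1}{h-1}\). For a fixed core \(H\), the labelled assignments \(p:V(H)\to\mathbb{Z}_{\geq2}\) with \(\sum_i(p_i-1)=\ell\) correspond to compositions of \(\ell\) into \(h\) positive parts \(q_i=p_i-1\). There are exactly \(N:=\binom{\ell-1}{h-1}\) of them. By the proof of Theorem~\ref{CoreLeafUnlabeledEnumeration}, \(b_H(\ell+2h,\ell)\) is the number of \(\operatorname{Aut}(H)\)-orbits on this set of \(N\) compositions.

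The first step is a two-sided bound valid for every \(H\). Each orbit has size at most \(|\operatorname{Aut}(H)|\), and the number of orbits is at most the number of compositions. Hence
\[
\frac{N}{|\operatorname{Aut}(H)|}\leq b_H(\ell+2h,\ell)\leq N,
\]
with equality on both sides when \(H\) is asymmetric. The lower bound is at least \(N/h!\), which is positive, although we will not need this.

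Next, let \(\alpha_h\) be the number of connected cores of order \(h\) with a nontrivial automorphism. By the facts quoted before the theorem (Babai, together with Wright), \(\alpha_h/c_h\to0\) as \(h\to\infty\). Summing the bounds over the classes \([H]\) gives
\[
(c_h-\alpha_h)N\leq b(\ell+2h,\ell)\leq c_hN,
\]
because asymmetric cores contribute exactly \(N\) each and the remaining cores contribute between \(0\) and \(N\). Dividing by \(c_hN\) yields
\[
1-\frac{\alpha_h}{c_h}\leq\frac{b(\ell+2h,\ell)}{c_h\binom{\ell-1}{h-1}}\leq1.
\]
The error term \(\alpha_h/c_h\) does not depend on \(\ell\). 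Choosing \(h_0\) so that \(\alpha_h/c_h<\varepsilon\) for all \(h\geq h_0\) therefore gives the uniform estimate, and the asymptotic equivalence follows as a special case.

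The only delicate point is the passage from ``almost all unlabelled graphs are asymmetric'' to ``almost all \emph{connected} unlabelled graphs are asymmetric.'' Let \(g_h\) be the number of unlabelled graphs of order \(h\) and \(\beta_h\) the number with a nontrivial automorphism. Then \(\alpha_h\leq\beta_h=o(g_h)\), and \(c_h=(1-o(1))g_h\) by Wright's result, so \(\alpha_h/c_h\leq \beta_h/c_h\to0\). This is exactly the consequence recorded in the paragraph preceding the theorem. I expect no other obstacle, since the uniformity in \(\ell\) comes for free from the orbit-counting bounds.
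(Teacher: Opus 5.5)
Your proposal is correct and follows essentially the same route as the paper. Both arguments rest on the sandwich \((c_h-r_h)N\le b(\ell+2h,\ell)\le c_hN\): asymmetric cores contribute exactly \(N\), symmetric cores contribute at most \(N\), and the relative error is bounded by \(r_h/c_h\to0\) independently of \(\ell\). The differences are cosmetic: you count compositions of \(\ell\) into \(h\) positive parts rather than weak compositions of \(s=\ell-h\), and your extra lower bound \(N/|\operatorname{Aut}(H)|\) is never used.
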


\begin{proof}
\rev{Fix a core \(H\) of order \(h\), and write \(p_i=2+q_i\). For a graph of order \(n\), the integer \(s=n-3h=\ell-h\) counts the extra leaves beyond two per core vertex. The nonnegative integers \(q_i\) therefore satisfy \(q_1+\cdots+q_h=s\). The notation \(s\) is used only in this proof to count these assignments.}
If \(H\) is asymmetric, then \(\operatorname{Aut}(H)\) is trivial, so distinct assignments \((q_1,\ldots,q_h)\) yield nonisomorphic core--leaf graphs. Hence every asymmetric core contributes exactly
\(N_{h,s}=\binom{s+h-1}{h-1}\)
graphs.

Let \(r_h\) be the number of connected nonisomorphic graphs of order \(h\) with a nontrivial automorphism. The classical asymmetry and connectedness results cited above imply
\(r_h=o(c_h).\)
For a symmetric core, the number of orbits of the \(N_{h,s}\) assignments under its automorphism group is at most \(N_{h,s}\). Consequently,
\[
(c_h-r_h)N_{h,s}
\leq b(n,\ell)\leq
c_hN_{h,s}.
\]
Dividing by \(c_hN_{h,s}\) and letting \(h\to\infty\) gives the result. Since the error is bounded solely by \(r_h/c_h\), independently of \(s\), the estimate is uniform in \(s\). Finally,
\(s+h-1=\ell-1,\)
which gives the first form of the formula.
\end{proof}

\begin{corollary}\label{CoreLeafExponentialGrowth}
The number \(b_n\) of core--leaf graphs satisfies
\[
\lim_{n\to\infty}\frac{\log_2 b_n}{n^2}=\frac1{18}.
\]
If \(e_X(n)\) counts all connected nonisomorphic graphs \rev{\(G\)} of order \(n\)
satisfying \(\rev{\mm_X}(G)=\dim(G)\), then, \rev{for each choice \(X\in\{L,Q\}\)},
\[
\frac1{18}\leq\liminf_{n\to\infty}
\frac{\log_2 e_X(n)}{n^2}
\leq\limsup_{n\to\infty}
\frac{\log_2 e_X(n)}{n^2}\leq\frac12.
\]
\end{corollary}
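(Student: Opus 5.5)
We need \(\log_2 b_n/n^2\to 1/18\), and then the sandwich bounds on \(e_X(n)\).

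\emph{Upper bound on \(b_n\).} Every graph counted by \(b_n\) has a core of order \(h\) with \(3h\le n\), so \(h\le\lfloor n/3\rfloor\). For each \(h\), the number of cores is at most the number of labelled graphs, \(2^{\binom h2}\). For a fixed labelled core, the number of leaf assignments with \(\sum p_i=n-h\) is at most \(\binom{n}{h}\le 2^n\). Hence
\[
b_n\le\sum_{h\le n/3}2^{\binom h2}2^n\le n\,2^{n+(n/3)^2/2},
\]
which gives \(\limsup\log_2 b_n/n^2\le 1/18\).

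\emph{Lower bound on \(b_n\).} Take \(h=\lfloor n/3\rfloor-1\), so that \(s=n-3h\ge 3\) extra leaves remain. Distribute them as \(q_1=s\) and \(q_i=0\) for \(i\ge2\); this is a valid assignment, with all \(p_i\ge2\). In the proof of Theorem~\ref{CoreLeafUnlabeledEnumeration}, the core of the resulting graph is recovered by deleting the degree-one vertices, so nonisomorphic cores yield nonisomorphic graphs. Therefore \(b_n\ge c_h\). The elementary bound \(c_h\ge 2^{\binom h2}/h!\) (labelled connected graphs make up most labelled graphs, and each isomorphism class contains at most \(h!\) labelled graphs) gives
\[
\log_2 c_h\ge \tfrac{h^2}{2}-O(h\log h).
\]
With \(h\sim n/3\), this yields \(\liminf\log_2 b_n/n^2\ge 1/18\). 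The point needing care here is to justify \(c_h\ge 2^{\binom h2}/(2h!)\), say, rather than just cite asymptotics. For this I would use the known fact that the fraction of connected labelled graphs tends to one, or count labelled graphs containing a fixed spanning path: these number \(2^{\binom h2-(h-1)}\), which is enough to leave the exponent unchanged.

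\emph{Bounds on \(e_X(n)\).} By Theorem~\ref{CoreLeafEquality}, every core--leaf graph satisfies \(\mm_X(G)=\dim(G)\), so \(e_X(n)\ge b_n\) and the liminf bound follows from the limit just proved. For the upper bound, \(e_X(n)\) is at most the number of graphs of order \(n\), which is at most \(2^{\binom n2}\le 2^{n^2/2}\). This gives the limsup bound \(1/2\).

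The main obstacle is the lower bound on \(b_n\), specifically the elementary lower estimate on \(c_h\). Everything else is crude counting.
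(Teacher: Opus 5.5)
Your proposal is correct and follows essentially the same route as the paper. It uses the same lower-bound construction (every connected core of order about \(n/3\), with the surplus leaves placed at one vertex, and the core recovered by deleting degree-one vertices), the same labelled-cores-times-leaf-assignments upper bound, and the same sandwich \(b_n\le e_X(n)\le 2^{\binom n2}\).

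The intermediate claim \(c_h\ge 2^{\binom h2}/h!\) is not justified by the reason you give, so drop it. Your spanning-path count gives \(c_h\ge 2^{\binom{h-1}{2}}/h!\), which is exactly the bound the paper obtains by fixing a universal vertex. Your shift to \(h=\lfloor n/3\rfloor-1\) is harmless but unnecessary.
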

\begin{proof}
Put \(h=\lfloor n/3\rfloor\) and \(r=n-3h\). Every connected core
of order \(h\) yields a graph of order \(n\) by attaching two leaves
to each vertex and adding \(r\) extra leaves at one vertex.
Different cores yield nonisomorphic graphs. Among labelled graphs on
\(h\) vertices, at least \(2^{\binom{h-1}{2}}\) are connected: fix
one vertex to be universal and choose the remaining edges freely.
Thus
\[
b_n\geq c_h\geq\frac{2^{\binom{h-1}{2}}}{h!}.
\]
Conversely, counting labelled cores and all leaf assignments overcounts
the isomorphism classes and gives
\[
b_n\leq\sum_{j=1}^{h}2^{\binom j2}
\binom{n-2j-1}{j-1}
\leq n\,2^{\binom h2+n}.
\]
Taking logarithms proves the limit. Finally,
\(b_n\leq e_X(n)\leq2^{\binom n2}\), proving the remaining bounds.
\end{proof}

The limit \(1/18\) is exact for the core--leaf subfamily. For the full equality classes, the upper bound \(1/2\) uses only the count of all graphs and does not exploit either equality condition.

For each \(n\in\{3,\ldots,17\}\), only cores with at most \(\lfloor n/3\rfloor\leq5\) vertices contribute. Write \([x^n]F(x)\) for the coefficient of \(x^n\) in a power series. For each \(h\in\{1,\ldots,5\}\), define the aggregate cycle-index polynomial
\[
Z_h(z_1,\ldots,z_h)=\sum_{[H]}Z_H(z_1,\ldots,z_h),
\]
where \([H]\) ranges over all isomorphism classes of connected graphs of order \(h\). The fixed-core formula gives
\[
b_n=[x^n]\sum_{h=1}^{5}x^{3h}
Z_h\bigl((1-x)^{-1},\ldots,(1-x^h)^{-1}\bigr).
\]
Enumerating these cores and their adjacency-preserving vertex permutations gives
\begin{align*}
Z_1&=z_1,\\
Z_2&=\tfrac12z_1^2+\tfrac12z_2,\\
Z_3&=\tfrac23z_1^3+z_1z_2+\tfrac13z_3,\\
Z_4&=\tfrac{19}{12}z_1^4+2z_1^2z_2+\tfrac54z_2^2
       +\tfrac23z_1z_3+\tfrac12z_4,\\
Z_5&=\tfrac{91}{15}z_1^5+\tfrac{19}{3}z_1^3z_2+5z_1z_2^2
       +\tfrac43z_1^2z_3+\tfrac23z_2z_3+z_1z_4+\tfrac35z_5.
\end{align*}
For orders \(h=1,2,3,4,5\), the numbers \(c_h\) of connected cores are respectively \(1,1,2,6,21\). Every connected graph is an admissible core. Thus, for \(h=2,3,4,5\), these are exactly the entries in the connected-graph column of Table~\ref{SmallGraphCensus}; the additional case \(h=1\) is \(K_1\). They can be generated by the vertex-addition procedure in Section~\ref{sec:census}; for each core, testing the \(h!\) permutations gives its automorphism group and hence its cycle-index polynomial. As a small example, the path and triangle of order three have cycle indices \((z_1^3+z_1z_2)/2\) and \((z_1^3+3z_1z_2+2z_3)/6\), whose sum is \(Z_3\). To calculate the table below, replace each variable \(z_j\) in \(Z_h(z_1,\ldots,z_h)\) by \((1-x^j)^{-1}\), for every \(j\in\{1,\ldots,h\}\). Multiply the resulting expression by \(x^{3h}\), sum over \(h=1,\ldots,5\), and expand through degree \(17\). The coefficient of \(x^n\) is \(b_n\).

\rev{The first values are the following.} $$ \begin{array}{c|rrrrrrrrrrrrrrr} n &3&4&5&6&7&8&9&10&11&12&13&14&15&16&17\\ \hline b_n &1&1&1&2&2&3&5&7&10&20&29&50&95&172&312 \end{array} $$

\section{\rev{A Census of Small Graphs}}\label{sec:census}

The preceding results describe several infinite families satisfying $$ \rev{\mm_L}(G)=\dim(G) \qquad\text{or}\qquad \rev{\mm_Q}(G)=\dim(G). $$

The counts in Table~\ref{SmallGraphCensus} concern the connected isomorphism classes of orders two through seven. They can be obtained by the following finite procedure, which specifies the graph enumeration and the tests independently of any software implementation.

Start with \(K_1\). For every \(n\in\{2,\ldots,7\}\), take one representative of each connected graph of order \(n-1\), add a new vertex with each nonempty subset of the existing vertices in turn as its neighborhood, and retain one representative of each resulting isomorphism class. No connected graph is missed: a leaf of any spanning tree can be deleted while leaving a connected graph on the remaining vertices. Two adjacency matrices represent the same class precisely when one is obtained from the other by applying the same permutation to its rows and columns. Thus even the removal of repetitions is a finite test. The numbers of representatives are \(1,2,6,21,112,853\), respectively, in agreement with the connected graphs in Read and Wilson's atlas~\cite{ReadWilson1998}.

For a representative with adjacency matrix \(A\), compute distances using
\[
 d(i,j)=\min\{t\in\{0,\ldots,n-1\}:(A^t)_{ij}>0\}.
\]
The set is nonempty because the graph is connected. Test vertex subsets \(S\) in increasing cardinality: \(S\) resolves the graph exactly when, for every pair \(i\ne j\), some \(s\in S\) satisfies \(d(i,s)\ne d(j,s)\). The first successful cardinality is therefore \(\dim(G)\).

Put \(D=\operatorname{diag}(\sum_j A_{1j},\ldots,\sum_j A_{nj})\). For each choice \(M\in\{D-A,D+A\}\), factor
\[
 \det(zI-M)=\prod_{i=1}^{t}f_i(z)^{a_i}
 \quad\text{over }\mathbb Q,
\]
where the \(f_i\) are distinct monic irreducible polynomials and the \(a_i\) are positive integers. Each \(f_i\) has distinct roots in characteristic zero, and different factors have no common root. Consequently the maximum eigenvalue multiplicity is exactly \(\max_i a_i\). Comparing these two maxima with \(\dim(G)\) gives the table. To identify the twin-free cases, test every distinct pair \(i,j\): they are twins exactly when \(A_{ik}=A_{jk}\) for every \(k\notin\{i,j\}\). These exhaustive finite tests establish the displayed counts; the proofs for the infinite families are given in their respective sections.

Table~\ref{SmallGraphCensus} reports the counts. The column \(\mm_L=\dim\) includes every graph satisfying the Laplacian equality, including those that also satisfy the signless Laplacian equality. In contrast, ``\(L\) only'' excludes those satisfying both. The two \(Q\) columns have the analogous meanings. Thus each equality column is the sum of ``both'' and its corresponding ``only'' column; for example, at order six, \(31=26+5\) and \(33=26+7\).

\begin{table}[H]
\centering
{
\captionsetup{labelfont={color=black},textfont={color=black}}
\begin{tabular}{c|r|r|r|r|r|r}
\(n\) & connected & \(\rev{\mm_L}=\dim\) & \(\rev{\mm_Q}=\dim\) & both & \(L\) only & \(Q\) only\\
\hline
2 & 1   & 1  & 1  & 1  & 0 & 0\\
3 & 2   & 2  & 2  & 2  & 0 & 0\\
4 & 6   & 5  & 5  & 5  & 0 & 0\\
5 & 21  & 8  & 8  & 8  & 0 & 0\\
6 & 112 & 31 & 33 & 26 & 5 & 7\\
7 & 853 & 41 & 66 & 32 & 9 & 34
\end{tabular}
\caption{Connected nonisomorphic graphs satisfying the Laplacian and signless Laplacian equality conditions.}
\label{SmallGraphCensus}
}
\end{table}

Several features of the table are noteworthy. First, for every connected graph \rev{with \(2\leq |V(G)|\leq5\)}, $$ \rev{\mm_L}(G)=\dim(G) \quad\Longleftrightarrow\quad \rev{\mm_Q}(G)=\dim(G). $$

Order \(6\) is the first order at which the two properties differ. There are five connected graphs of order \(6\) satisfying the Laplacian equality but not the signless Laplacian equality, and seven satisfying the signless Laplacian equality but not the Laplacian equality.

One of the latter graphs is \(K_2\vee(2K_2),\) the smallest member of the infinite family identified in Corollary~\ref{CoreSatelliteLQSeparation}. \rev{This gives an explicit graph of the first order at which the two equality conditions differ.}

There are also small examples in the opposite direction. \rev{Take \(T=\Theta_{2,2,3}\), with vertex set \(\{u,v,a,b,c,d\}\) and edge set \(\{ua,av,ub,bv,uc,cd,dv\}\). The three paths are \(u,a,v\), \(u,b,v\), and \(u,c,d,v\); they share only their endpoints \(u,v\).} A direct calculation gives \(\dim(T)=2\) and \(\chi_{L(T)}(x) = x(x-4)(x-2)^2(x^2-6x+6),\) so that \(\rev{\mm_L}(T)=2.\) On the other hand, $$ \chi_{Q(T)}(x) = (x-2)(x^2-4x+2)(x^3-8x^2+16x-4), $$ which is square-free. Hence \(\rev{\mm_Q}(T)=1,\) and therefore \(\rev{\mm_L}(T)=\dim(T)>\rev{\mm_Q}(T).\)

The census also shows that equality is not exclusively a twin-induced phenomenon. For example, paths of order at least \(4\) have no nontrivial twin classes, while \(\rev{\mm_L}(P_n)=\rev{\mm_Q}(P_n)=\dim(P_n)=1.\)

More generally, among the connected graphs of order \(7\), there are twin-free graphs for which exactly one of the two equality conditions holds: \rev{three satisfy only the Laplacian equality and six satisfy only the signless Laplacian equality, as verified by the pairwise twin test on every representative in the census; Table~\ref{TwinFreeExamples} lists all nine.} Thus a complete characterization of the equality cases must necessarily account for spectral multiplicities not arising from twin classes.

Table~\ref{TwinFreeExamples} specifies all nine graphs by their edge sets on the vertex set \(\{1,\ldots,7\}\).
\begin{table}[H]
\centering
\color{black}\small
\captionsetup{labelfont={color=black},textfont={color=black}}
\renewcommand{\arraystretch}{1.15}
\begin{tabular}{c|>{\raggedright\arraybackslash}p{0.64\textwidth}|ccc}
 & Edges & \(\dim(G)\)&\(\mm_L(G)\)&\(\mm_Q(G)\)\\\hline
1 & \(\{1,2\},\allowbreak \{1,6\},\allowbreak \{1,7\},\allowbreak \{2,3\},\allowbreak \{2,6\},\allowbreak \{3,4\},\allowbreak \{4,5\},\allowbreak \{5,6\}\) & 2&2&1\\[3pt]
2 & \(\{1,2\},\allowbreak \{1,6\},\allowbreak \{1,7\},\allowbreak \{2,3\},\allowbreak \{3,4\},\allowbreak \{4,5\},\allowbreak \{4,7\},\allowbreak \{5,6\}\) & 2&2&1\\[3pt]
3 & \(\{1,2\},\allowbreak \{1,5\},\allowbreak \{2,3\},\allowbreak \{2,7\},\allowbreak \{3,4\},\allowbreak \{4,5\},\allowbreak \{4,6\},\allowbreak \{5,6\},\allowbreak \{6,7\}\) & 3&3&2\\[3pt]
\hline
4 & \(\{1,2\},\allowbreak \{1,5\},\allowbreak \{1,6\},\allowbreak \{2,3\},\allowbreak \{2,6\},\allowbreak \{2,7\},\allowbreak \{3,4\},\allowbreak \{3,6\},\allowbreak \{4,5\}\) & 2&1&2\\[3pt]
5 & \(\{1,2\},\allowbreak \{1,4\},\allowbreak \{1,5\},\allowbreak \{2,3\},\allowbreak \{2,5\},\allowbreak \{2,6\},\allowbreak \{2,7\},\allowbreak \{3,4\},\allowbreak \{3,5\},\allowbreak \{3,6\},\allowbreak \{5,6\},\allowbreak \{6,7\}\) & 2&1&2\\[3pt]
6 & \(\{1,4\},\allowbreak \{1,6\},\allowbreak \{2,3\},\allowbreak \{2,6\},\allowbreak \{2,7\},\allowbreak \{3,5\},\allowbreak \{3,7\},\allowbreak \{4,5\},\allowbreak \{4,7\},\allowbreak \{5,6\},\allowbreak \{5,7\},\allowbreak \{6,7\}\) & 2&1&2\\[3pt]
7 & \(\{1,2\},\allowbreak \{1,3\},\allowbreak \{1,4\},\allowbreak \{1,5\},\allowbreak \{1,6\},\allowbreak \{1,7\},\allowbreak \{2,3\},\allowbreak \{2,4\},\allowbreak \{2,5\},\allowbreak \{3,6\},\allowbreak \{3,7\},\allowbreak \{4,6\},\allowbreak \{5,7\}\) & 3&2&3\\[3pt]
8 & \(\{1,3\},\allowbreak \{1,6\},\allowbreak \{1,7\},\allowbreak \{2,3\},\allowbreak \{2,5\},\allowbreak \{2,7\},\allowbreak \{3,4\},\allowbreak \{4,5\},\allowbreak \{4,6\},\allowbreak \{4,7\},\allowbreak \{5,6\},\allowbreak \{5,7\},\allowbreak \{6,7\}\) & 3&2&3\\[3pt]
9 & \(\{1,2\},\allowbreak \{1,3\},\allowbreak \{1,4\},\allowbreak \{1,7\},\allowbreak \{2,5\},\allowbreak \{2,6\},\allowbreak \{2,7\},\allowbreak \{3,4\},\allowbreak \{3,6\},\allowbreak \{3,7\},\allowbreak \{4,5\},\allowbreak \{4,7\},\allowbreak \{5,6\},\allowbreak \{5,7\},\allowbreak \{6,7\}\) & 3&2&3\\[3pt]
\end{tabular}
\caption{The nine twin-free separating graphs of order seven, specified by edges.}
\label{TwinFreeExamples}
\end{table}

If \(G\) is bipartite, put \(+1\) on one bipartition class and \(-1\) on the other as the diagonal entries of a matrix \(S\). Then \(S^{-1}=S\) and \(Q(G)=S^{-1}L(G)S\), so the matrices are similar and have identical spectra, including multiplicities. Hence \(\mm_L(G)=\mm_Q(G)\), and every graph separating the two equality properties is nonbipartite. All separating examples in the census satisfy this requirement.

\section{A Family of Theta Graphs}\label{sec:theta}

The computational census of the preceding section suggests that theta graphs provide another natural setting in which the Laplacian and signless Laplacian equality properties may behave differently. We consider here a particularly simple infinite family.

\rev{For every \(p,q,r\in\mathbb{Z}_{\geq2}\), construct \(\Theta_{p,q,r}\) from three paths with \(p,q,r\) edges, respectively, by identifying their three initial vertices as one vertex \(u\) and their three terminal vertices as one distinct vertex \(v\). No other vertices are identified, and no additional edges are added. Thus the paths have exactly the two endpoints in common.}

\rev{Throughout this section, \rev{for every odd \(r\in\mathbb{Z}_{\geq 3}\)}, write \(G_r=\Theta_{2,2,r}\).}

The metric dimension of theta graphs was determined by Knor, Sedlar, and \v{S}krekovski~\cite{Knor2022}; their classification gives \(\dim(G_r)=2\) for the present family. We retain a direct verification and focus on the maximum eigenvalue multiplicities.

\rev{The two internal vertices of the length-two paths have the same open neighborhood \(\{u,v\}\) and are nonadjacent. They are therefore false twin vertices of degree \(2\),} and therefore already produce the eigenvalue \(2\) for both \(L(G_r)\) and \(Q(G_r)\). The following result shows that a second occurrence of this eigenvalue alternates between the Laplacian and signless Laplacian according to \(r\) modulo \(4\).

\begin{theorem}\label{ThetaAlternation} \rev{\rev{For every odd \(r\in\mathbb{Z}_{\geq 3}\)},} \(\dim(G_r)=2.\) Moreover: \begin{enumerate}[label=\textup{(\roman*)}] \item if \(r\equiv3\pmod4,\) then \(\rev{\mm_L}(G_r)=2\) \rev{and} \(\rev{\mm_Q}(G_r)=1\); \item if \(r\equiv1\pmod4,\) then \(\rev{\mm_L}(G_r)=1\) \rev{and} \(\rev{\mm_Q}(G_r)=2\). \end{enumerate} Consequently, \(\rev{\mm_L}(G_r)=\dim(G_r)\) if and only if \(r\equiv3\pmod4,\) whereas \(\rev{\mm_Q}(G_r)=\dim(G_r)\) if and only if \(r\equiv1\pmod4.\) \end{theorem}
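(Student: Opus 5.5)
The plan is to separate the easy metric part from the spectral part. The spectral part runs through Proposition~\ref{ExactMultiplicity} applied to the single nontrivial twin class \(\{a,b\}\). Label \(G_r\) by \(u,v\), the false twins \(a,b\) (the interior vertices of the two length-two paths), and the long path \(u=w_0,w_1,\ldots,w_{r-1},w_r=v\).

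For \(\dim(G_r)=2\), the lower bound comes from two facts. Lemma~\ref{TwinLowerBound} gives \(\dim\ge1\), and \(G_r\) is not a path, while the graphs of metric dimension one are exactly the paths. For the upper bound I would exhibit an explicit resolving pair, first trying \(\{a,w_1\}\). I would check it by listing \(d(\cdot,a)\) and \(d(\cdot,w_1)\) along the cycle \(u,w_1,\ldots,w_{r-1},v,a\) and then at \(b\), where \(b\) is separated from \(u\) and \(v\) by its distance \(2\) to \(a\). Alternatively I would simply invoke the classification of Knor, Sedlar and \v{S}krekovski~\cite{Knor2022}.

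\textbf{Spectral set-up.} By Proposition~\ref{ExactMultiplicity}, \(W_{\{a,b\}}\) contributes the eigenvalue \(2\) once, for both \(L\) and \(Q\). Every other multiplicity comes from the quotient \(B_X\) on \(\mathcal U\) (order \(r+2\)), so
\[
\mult_{X(G_r)}(2)=1+\mult_{B_X}(2),\qquad \mult_{X(G_r)}(\lambda)=\mult_{B_X}(\lambda)\ (\lambda\neq2).
\]
The theorem therefore reduces to two claims. First, every eigenvalue of \(B_X\) has multiplicity at most one. Second, \(2\in\Spec(B_L)\) iff \(r\equiv3\pmod4\), and \(2\in\Spec(B_Q)\) iff \(r\equiv1\pmod4\).

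\textbf{Eigenvalue \(2\).} The condition \(2\in\Spec(B_X)\) is a direct computation with the eigen-equations on cell-constant vectors. For \(L\), the \(a\)-equation with common twin value \(t\) gives \(2t-x_u-x_v=2t\), so \(x_v=-x_u\). The interior path equations become \(x_{w_{k+1}}=-x_{w_{k-1}}\), so the path values have period \(4\): \(x_{w_{2j}}=(-1)^jx_u\) and \(x_{w_{2j+1}}=(-1)^jx_{w_1}\). Since \(r\) is odd, \(x_v=x_{w_r}=(-1)^{(r-1)/2}x_{w_1}\). Imposing \(x_v=-x_u\) and the equations at \(u\) and \(v\) (which also fix \(t\)) should give a consistent nonzero solution exactly when \(r\equiv3\pmod4\). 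For \(Q\) the same computation runs with signs flipped, giving \(x_v=x_u\) and \(x_{w_{k+1}}=-x_{w_{k-1}}\), and leads to the opposite residue. A nonzero solution also yields \(\mult_{B_X}(2)=1\) once simplicity is established.

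\textbf{Simplicity of \(B_X\) (main obstacle).} A first bound is easy. Given \(x_u\) and \(x_{w_1}\), the path recurrence determines all \(x_{w_k}\) (hence \(x_v\)), and the \(u\)-equation determines the twin value, so each eigenspace has dimension at most \(2\). To reach dimension \(1\), I would use the involution swapping \(u\leftrightarrow v\) and \(w_k\leftrightarrow w_{r-k}\), which fixes the twin cell. It splits \(B_X\) into a symmetric block and an antisymmetric block. In the antisymmetric block the twin coordinate vanishes, and the cells are \(\{u,v\}\) and \(\{w_k,w_{r-k}\}\), \(1\le k\le(r-1)/2\); since \(r\) is odd, the middle is the edge \(w_{(r-1)/2}w_{(r+1)/2}\). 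Both blocks are irreducible tridiagonal (Jacobi) matrices with nonzero off-diagonal entries, hence each has simple spectrum. It remains to show the two blocks share no eigenvalue, and this is where I expect the real work. I would parametrize \(\lambda=2-2\cos\theta\) for \(L\) (or \(\lambda=2+2\cos\theta\) for \(Q\)), solve the path recurrence as \(x_{w_k}=\alpha\cos k\theta+\beta\sin k\theta\), and write each block's characteristic equation as an explicit trigonometric equation, with boundary conditions at \(u\) and at the middle edge (\(x_{w_{(r-1)/2}}=\pm x_{w_{(r+1)/2}}\)). I would then show that a common root forces \(\cos\theta=0\), i.e.\ \(\lambda=2\). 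A separate check handles the endpoint values \(\lambda\in\{0,4\}\) and the real eigenvalues outside \([0,4]\), using the \(\cosh\) parametrization there. At \(\lambda=2\), the period-\(4\) computation above shows that only one of the two blocks contains \(2\) for each \(r\), depending on \(r\bmod 4\). Combining these gives \(\mm_X(G_r)\in\{1,2\}\), equal to \(2\) exactly in the stated congruence classes, and the final equivalences follow from \(\dim(G_r)=2\).
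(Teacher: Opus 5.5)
Your architecture is the paper's. You split off $e_a-e_b$, use the reflection $w_i\leftrightarrow w_{r-i}$ to split the quotient into a symmetric and an antisymmetric irreducible tridiagonal block, get simplicity blockwise, prove the block spectra disjoint, and then locate $2$. The metric part, via the characterization of paths as the graphs of metric dimension one, is also fine. The paper does the disjointness step with continuants $p_j$. These are your trigonometric solutions in polynomial form, $p_j(2\cos\phi)=\sin((j+1)\phi)/\sin\phi$, so the paper needs no case split by the location of $z$. Two points need repair.

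First, the $Q$ computation at $\lambda=2$ has a sign error. The row of $Q$ at $a$ reads $2t+x_u+x_v=2t$, so $x_v=-x_u$ exactly as for $L$; nothing flips. Write $r=2k+1$. With your $x_v=x_u$, the $u$- and $v$-equations both give $2t=-(1+(-1)^k)x_u$. They are then compatible for every $k$, and you would wrongly conclude $2\in\Spec(B_Q)$ for all $r$. With the correct sign, $x_{w_1}=(-1)^{k+1}x_u$. The $u$-equation then gives $2t=-(1-(-1)^k)x_u$ and the $v$-equation gives $2t=(1-(-1)^k)x_u$. So a nonzero solution exists exactly when $k$ is even, i.e.\ $r\equiv1\pmod{4}$, and then $t=0$.

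Second, the disjointness of the two block spectra, which you rightly call the crux, is only planned, and its predicted outcome is wrong. Here is what your ansatz actually gives.
\begin{itemize}
\item For $L$, the antisymmetric block has $x_{w_i}\propto\sin((i+\tfrac12)\theta)$ and condition $\cos(\theta/2)\sin((k+1)\theta)=0$. The symmetric block, with ansatz $\cos((i-k-\tfrac12)\theta)$ and after eliminating $t=x_u/\cos\theta$, gives $\cos\theta\cos(\theta/2)\cos((k+1)\theta)=\cos((k+\tfrac12)\theta)$. Substituting $\sin((k+1)\theta)=0$ leaves $\cos(\theta/2)(\cos\theta-1)=0$.
\item For $Q$, the conditions are $\cos(\theta/2)\cos((k+1)\theta)=0$ and $\cos\theta\sin(\theta/2)\sin((k+1)\theta)+\cos((k+\tfrac12)\theta)=0$. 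Elimination leaves $\sin(\theta/2)(1+\cos\theta)=0$.
\end{itemize}
So common roots are never forced to $\cos\theta=0$. The only candidates are the endpoints $\theta\in\{0,\pi\}$, i.e.\ $\lambda\in\{0,4\}$. There your ansatz degenerates and the conditions can vanish spuriously. For $L$ at $\theta=\pi$, $4$ is a genuine antisymmetric eigenvalue, while the symmetric condition vanishes only because its ansatz is identically zero.

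The endpoint check is therefore the real content, not a side case.
\begin{itemize}
\item For $L$, you must verify on the blocks themselves that the symmetric block is nonsingular at $4$ (the paper's $f_L^+(4)=-8k$). You must also verify that the antisymmetric block is nonsingular at $0$.
\item For $Q$, the only surviving candidate is $\lambda=0$. It is excluded because $G_r$ is connected and contains the odd cycle $u,w_1,\ldots,w_{r-1},v,a$, so $Q(G_r)$ is nonsingular.
\end{itemize}
By contrast, your $\cosh$ branch for eigenvalues outside $[0,4]$ is unnecessary. Both antisymmetric spectra are explicit from the conditions above and lie in $[0,4]$.
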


\begin{proof}

Let \(a\) and \(b\) be the internal vertices of the two paths of length \(2\), and write \rev{\(r=2k+1\) for the positive integer \(k=(r-1)/2\).} Denote the vertices of the third path by \(u=w_0,w_1,\ldots,w_{2k+1}=v.\) We first verify the metric dimension directly. No single vertex resolves \(G_r\): if the chosen vertex is outside \(\{a,b\}\), then the false twins \(a\) and \(b\) have the same distance from it, whereas if the chosen vertex is \(a\) or \(b\), then \(u\) and \(v\) both have distance \(1\) from it. Hence \(\dim(G_r)\geq2\).

The set \(\{a,w_1\}\) resolves \(G_r\). \rev{When \(r=3\), this is checked immediately.} Suppose \(r\geq5\). For the internal long-path vertices, the distance pairs with respect to \((a,w_1)\) are \((i+1,i-1),\quad 1\leq i\leq k,\) \((k+1,k),\quad i=k+1,\) \((k,k+1),\quad i=k+2,\) and \((2k+2-i,\,2k+4-i),\quad k+3\leq i\leq2k.\) These four types are pairwise distinct. The remaining vertices have representations $$ r(a\mid\{a,w_1\})=(0,2),\qquad r(b\mid\{a,w_1\})=(2,2), $$ $$ r(u\mid\{a,w_1\})=(1,1),\qquad r(v\mid\{a,w_1\})=(1,3), $$ which are distinct from one another and from the preceding pairs. Thus \(\dim(G_r)=2\).

Since \(a\) and \(b\) are false twins of degree \(2\), \(e_a-e_b\) is an eigenvector with eigenvalue \(2\) for both \(L(G_r)\) and \(Q(G_r)\).

Consider the orthogonal complement of \(\operatorname{span}\{e_a-e_b\}\). On this subspace the coordinates at \(a\) and \(b\) are equal. The reflection \(w_i\longleftrightarrow w_{2k+1-i}\) which interchanges \(u\) and \(v\) and fixes \(a\) and \(b\) commutes with both \(L(G_r)\) and \(Q(G_r)\). Hence this complementary subspace splits into its symmetric and antisymmetric parts.

Write \(e_v\) for the coordinate vector at \(v\), and set
\[
q=\frac{e_a+e_b}{\sqrt2},\qquad
u_i^\pm=\frac{e_{w_i}\pm e_{w_{2k+1-i}}}{\sqrt2},\quad 0\leq i\leq k.
\]
The antisymmetric and symmetric bases are, respectively,
\[
(u_0^-,\ldots,u_k^-),\qquad (q,u_0^+,\ldots,u_k^+).
\]
Write \(T(a;b)\) for the symmetric tridiagonal matrix with diagonal \(a=(a_1,\ldots,a_m)\) and off-diagonal vector \(b=(b_1,\ldots,b_{m-1})\), so \(T_{i,i+1}=T_{i+1,i}=b_i\). In these bases the restrictions are
\[
\begin{aligned}
B_L^-&=T(3,\underbrace{2,\ldots,2}_{k-1},3;
                \underbrace{-1,\ldots,-1}_{k}),\\
B_L^+&=T(2,3,\underbrace{2,\ldots,2}_{k-1},1;
                -2,\underbrace{-1,\ldots,-1}_{k}),\\
B_Q^-&=T(3,\underbrace{2,\ldots,2}_{k-1},1;
                \underbrace{1,\ldots,1}_{k}),\\
B_Q^+&=T(2,3,\underbrace{2,\ldots,2}_{k-1},3;
                2,\underbrace{1,\ldots,1}_{k}).
\end{aligned}
\]
An empty repeated list is omitted, so these definitions apply also when \(k=1\). The entries \(\pm2\) follow from the four edges between \(\{a,b\}\) and \(\{u,v\}\) in the normalized symmetric basis.

All four matrices are irreducible symmetric tridiagonal matrices. Consequently, each of them has simple spectrum.

Let $$ f_X^\pm(z)=\det(zI-B_X^\pm), \qquad X\in\{L,Q\}. $$

We give an explicit continuant argument for disjointness of the two
spectra. Put \(t=z-2\), and define
\[
p_0=1,\qquad p_1=t,\qquad p_{j+1}=tp_j-p_{j-1}\quad\rev{\text{for every }j\in\mathbb{Z}_{\geq1}}.
\]
\rev{For a tridiagonal matrix \(T(a;b)\), the leading characteristic determinants satisfy \(D_0=1\), \(D_1=z-a_1\), and \(D_j=(z-a_j)D_{j-1}-b_{j-1}^2D_{j-2}\). Applying this recurrence to the four restrictions gives, \rev{\rev{for every \(k\in\mathbb{Z}_{\geq 1}\)}},}
\[
\begin{aligned}
f_L^-&=(t-2)p_k,\\
f_L^+&=(t+2)\bigl((t-2)p_k-2p_{k-1}\bigr),\\
f_Q^-&=tp_k-2p_{k-1},\\
f_Q^+&=(t^2-2t-4)p_k+4p_{k-1}.
\end{aligned}
\]
These identities also cover the smallest case \(k=1\).
Consecutive \(p_j\) have no common root, by the recurrence and
\(p_0=1\), and \(p_j(2)=j+1\),
\(p_j(-2)=(-1)^j(j+1)\).
A zero of \(f_L^-\) either has \(t=2\), where
\(f_L^+=-8k\ne0\), or satisfies \(p_k=0\), where
\(f_L^+=-2(t+2)p_{k-1}\ne0\).
\rev{In the signless Laplacian case, a zero} of \(f_Q^-\) satisfies
\(2p_{k-1}=tp_k\) and \(p_k\ne0\); hence at that zero
\(f_Q^+=(t^2-4)p_k\).
A common zero would therefore have \(t=2\) or \(t=-2\), but
\(tp_k-2p_{k-1}=2\) at \(t=2\) and
\(tp_k-2p_{k-1}=2(-1)^{k+1}\) at \(t=-2\).
Thus the symmetric and antisymmetric spectra are disjoint for both
matrices. Since each restriction has simple spectrum, every eigenvalue
in the complement of \(\operatorname{span}\{e_a-e_b\}\) is simple.

It remains only to determine whether the twin-induced eigenvalue \(2\) also occurs in this complementary component. Evaluating the same continuant recurrences at \(z=2\) gives $$ f_L^-(2)=0 \quad\Longleftrightarrow\quad k\text{ is odd}, $$ while \(f_L^+(2) \neq 0\) \rev{\rev{for every \(k\in\mathbb{Z}_{\geq 1}\)}}. Therefore \(2\) occurs once in the complementary Laplacian spectrum precisely when \(k\) is odd. Since \(r=2k+1 ,\) this is equivalent to \(r\equiv3\pmod4.\)

Together with the twin-induced eigenvector \(e_a-e_b\), this gives $$ \mult_{L(G_r)}(2) = \begin{cases} 2,&r\equiv3\pmod4,\\ 1,&r\equiv1\pmod4. \end{cases} $$

Since all remaining eigenvalues are simple, $$ \rev{\mm_L}(G_r) = \begin{cases} 2,&r\equiv3\pmod4,\\ 1,&r\equiv1\pmod4. \end{cases} $$

For the signless Laplacian, the corresponding evaluation gives $$ f_Q^-(2)=0 \quad\Longleftrightarrow\quad k\text{ is even}, $$ whereas \(f_Q^+(2) \neq 0\) \rev{\rev{for every \(k\in\mathbb{Z}_{\geq 1}\)}}. Thus $$ \mult_{Q(G_r)}(2) = \begin{cases} 1,&r\equiv3\pmod4,\\ 2,&r\equiv1\pmod4. \end{cases} $$

Again all remaining eigenvalues are simple, and hence $$ \rev{\mm_Q}(G_r) = \begin{cases} 1,&r\equiv3\pmod4,\\ 2,&r\equiv1\pmod4. \end{cases} $$

Combining these formulae with \(\dim(G_r)=2\) proves the result.

\end{proof}

\begin{corollary}\label{ThetaInfiniteSeparation} The two families $$ \rev{\{\Theta_{2,2,4j+3}\}_{j=0}^{\infty}} \qquad\text{and}\qquad \rev{\{\Theta_{2,2,4j+5}\}_{j=0}^{\infty}} $$ give infinitely many examples separating the two equality properties. \rev{More precisely, for every \(j\in\mathbb{Z}_{\geq0}\),} $$ \rev{\mm_L}(\Theta_{2,2,4j+3}) = \dim(\Theta_{2,2,4j+3}) = 2 > \rev{\mm_Q}(\Theta_{2,2,4j+3}) = 1 , $$ whereas $$ \rev{\mm_Q}(\Theta_{2,2,4j+5}) = \dim(\Theta_{2,2,4j+5}) = 2 > \rev{\mm_L}(\Theta_{2,2,4j+5}) = 1. $$ \end{corollary}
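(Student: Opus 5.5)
This corollary is a direct specialization of Theorem~\ref{ThetaAlternation}, so the plan is to substitute the two residue classes and read off the values. For every \(j\in\mathbb{Z}_{\geq0}\), the integers \(r=4j+3\) and \(r=4j+5\) are odd and at least \(3\), so both lie in the range covered by the theorem. Moreover \(4j+3\equiv3\pmod4\) and \(4j+5\equiv1\pmod4\). The theorem also gives \(\dim(G_r)=2\) for every such \(r\).

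For \(r=4j+3\), case~\textup{(i)} of Theorem~\ref{ThetaAlternation} gives \(\mm_L(\Theta_{2,2,4j+3})=2\) and \(\mm_Q(\Theta_{2,2,4j+3})=1\). Combined with \(\dim=2\), this yields
\[
\mm_L=\dim=2>1=\mm_Q.
\]
For \(r=4j+5\), case~\textup{(ii)} gives the mirror statement
\[
\mm_Q=\dim=2>1=\mm_L.
\]

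It remains to see that each family is infinite and that its members are pairwise nonisomorphic. For this I would note that \(\Theta_{2,2,r}\) has \(r+3\) vertices, so distinct values of \(j\) give graphs of distinct orders.

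There is no genuine obstacle here. All the spectral work, namely the continuant evaluations at \(z=2\) and the disjointness of the symmetric and antisymmetric spectra, is already contained in the proof of the theorem. The only points needing care are the parity bookkeeping \(k=(r-1)/2\), which is odd exactly when \(r\equiv3\pmod4\), and confirming that \(j=0\) gives \(r=3\) and \(r=5\), both admissible. The case \(r=3\) is also consistent with the explicit polynomials for \(\Theta_{2,2,3}\) computed in Section~\ref{sec:census}.
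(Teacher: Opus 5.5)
Your proposal is correct and matches how the paper obtains this corollary: the paper gives no separate proof, and the corollary is the direct specialization of Theorem~\ref{ThetaAlternation} to \(r=4j+3\) (so \(k=2j+1\) is odd) and \(r=4j+5\) (so \(k=2j+2\) is even), with \(\dim=2\) supplied by the same theorem. Your extra observation that \(\Theta_{2,2,r}\) has order \(r+3\), so the members of each family are pairwise nonisomorphic, is a sound supplement justifying ``infinitely many.''
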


\begin{remark}
The graph \(\Theta_{2,2,3}\) is one of the smallest separating examples in the census. The formula for \(\mult_{Q(G_r)}(2)\) is a direct specialization of Paone and Paone~\cite[Corollary~8.1]{PaonePaone2026}, rather than a new value for that particular multiplicity. The argument above additionally proves that all other signless Laplacian eigenvalues are simple, determines the Laplacian multiplicities, and compares both maxima with the known metric dimension.
\end{remark}

\section{\rev{Open Problems}}\label{sec:conclusion}
{

The results above leave the following questions.

\begin{problem}
Characterize all connected graphs \(G\) satisfying
\(\dim(G)=\rev{\mm_X}(G),\qquad X\in\{L,Q\}.\)
In particular, identify structural mechanisms, beyond twin classes, that force the maximum spectral multiplicity to equal the metric dimension.
\end{problem}

\begin{problem}
Determine all theta graphs \(\Theta_{p,q,r}\) satisfying
\(\rev{\mm_L}(\Theta_{p,q,r})=\dim(\Theta_{p,q,r})\)
and, separately,
\(\rev{\mm_Q}(\Theta_{p,q,r})=\dim(\Theta_{p,q,r}).\)
\rev{The metric dimension is already known~\cite{Knor2022}; the remaining task here is to determine the maximum spectral multiplicities and compare them with that value.} More generally, determine to what extent the equality behavior is governed by congruence information on the path lengths.
\end{problem}

\begin{problem}
Do there exist infinite families of connected twin-free graphs satisfying exactly one of the two equalities
\(\rev{\mm_L}(G)=\dim(G),\qquad \rev{\mm_Q}(G)=\dim(G)?\)
The computational census shows that finite twin-free examples occur in both directions.
\end{problem}

\begin{problem}
Let \(e_L(n)\) and \(e_Q(n)\) denote the numbers of connected nonisomorphic graphs of order \(n\) satisfying the Laplacian and signless Laplacian equality conditions, respectively. Determine their precise asymptotic growth. In particular, determine whether the limits
\[
\alpha_L=\lim_{n\to\infty}\frac{\log_2 e_L(n)}{n^2},
\qquad
\alpha_Q=\lim_{n\to\infty}\frac{\log_2 e_Q(n)}{n^2}
\]
exist, and, if so, determine their values and whether \(\alpha_L=\alpha_Q\).
Corollary~\ref{CoreLeafExponentialGrowth} places both lower and upper limiting exponents in \([1/18,1/2]\). \rev{The upper endpoint is the general graph-counting bound; narrowing this interval requires information beyond the present core--leaf construction.}
\end{problem}

\begin{problem}
\rev{A subset \(M\subseteq V(G)\) is a \emph{module} if every vertex outside \(M\) is adjacent to either all or none of \(M\); a modular partition is a partition into such subsets. Which equitable partitions, as defined in Section~\ref{sec:twins}, or modular partitions yield lower bounds on metric dimension analogous to \(\dim(G)\geq\rev{\te}(G)\)? When can their invariant-subspace decompositions prove equality with a maximum eigenvalue multiplicity?}
\end{problem}

\begin{problem}
Extend Theorem~\ref{CoreLeafEquality} from pendant leaves to repeated rooted branches. More precisely, let \(H\) be a connected graph with vertices \(v_1,\ldots,v_h\), and \rev{for every \(i\in\{1,\ldots,h\}\) choose \(p_i\in\mathbb{Z}_{\geq2}\) and attach \(p_i\) isomorphic copies of a graph \(F_i\) with a distinguished vertex called its root} at \(v_i\), identifying the root of each copy with \(v_i\). Determine conditions on \(F_1,\ldots,F_h\) under which
\[
\rev{\mm_L}(G)=\dim(G)
\qquad\text{or}\qquad
\rev{\mm_Q}(G)=\dim(G).
\]
In particular, characterize the cases in which
\(\rev{\mm_L}(G)=\rev{\mm_Q}(G)=\dim(G),\)
and enumerate the resulting graphs up to isomorphism. Even the uniform case \(F_1=\cdots=F_h=F\), with a fixed nontrivial rooted graph \(F\), appears to be a natural first step.
\end{problem}
}

\section*{\rev{Acknowledgements}}
\rev{This paper was produced in an AI-assisted workflow (model GPT-6 Astra). The authors are responsible for the content of the paper.}


\begin{thebibliography}{99}

\bibitem{Abreu2012}
N. M. Abreu, D. M. Cardoso, E. A. Martins, M. Robbiano, and B. San Martin,
On the Laplacian and signless Laplacian spectrum of a graph with \(k\) pairwise co-neighbor vertices,
\textit{Linear Algebra Appl.}
437 (2012), 2308--2316.

\bibitem{AbreuJustelMarkenzon2020}
N. Abreu, C. M. Justel, and L. Markenzon,
Strictly chordal graphs: Structural properties and integer Laplacian eigenvalues,
\textit{Linear Algebra Appl.}
682 (2024), 351--362.

\bibitem{AndradeCardosoPastenRojo2015}
E. Andrade, D. M. Cardoso, G. Past\'en, and O. Rojo,
On Faria's inequality for the Laplacian and signless Laplacian
spectra: a unified approach,
\textit{Linear Algebra Appl.}
472 (2015), 81--96.

\bibitem{Babai1995}
L. Babai,
Automorphism groups, isomorphism, reconstruction,
in \textit{Handbook of Combinatorics}, Vol.~2,
R. L. Graham, M. Gr\"otschel, and L. Lov\'asz (eds.),
North-Holland, Amsterdam, 1995, pp.~1447--1540.

\bibitem{Barrett2017}
W. Barrett, A. Francis, and B. Webb,
Equitable decompositions of graphs with symmetries,
\textit{Linear Algebra Appl.}
513 (2017), 409--434.

\bibitem{Brouwer2012}
A. E. Brouwer and W. H. Haemers,
\textit{Spectra of Graphs},
Springer, 2012.

\bibitem{Cardoso2007}
D. M. Cardoso, C. Delorme, and P. Rama,
Laplacian eigenvectors and eigenvalues and almost equitable partitions,
\textit{European J. Combin.}
28 (2007), 665--673.

\bibitem{CardosoHJoin2013}
D. M. Cardoso, M. A. de Freitas, E. A. Martins, and M. Robbiano,
Spectra of graphs obtained by a generalization of the join graph
operation,
\textit{Discrete Math.}
313 (2013), 733--741.

\bibitem{Chartrand2000}
G. Chartrand, L. Eroh, M. A. Johnson, and O. R. Oellermann,
Resolvability in graphs and the metric dimension of a graph,
\textit{Discrete Appl. Math.}
105 (2000), 99--113.

\bibitem{Cvetkovic2007}
D. Cvetkovi\'c, P. Rowlinson, and S. K. Simi\'c,
Signless Laplacians of finite graphs,
\textit{Linear Algebra Appl.}
423 (2007), 155--171.

\bibitem{Cvetkovic2010}
D. M. Cvetkovi\'c, P. Rowlinson, and S. K. Simi\'c,
\textit{An Introduction to the Theory of Graph Spectra},
Cambridge University Press, 2010.

\bibitem{EstradaBenziCoreSatellite}
E. Estrada and M. Benzi,
Core--satellite graphs: Clustering, assortativity and spectral properties,
\textit{Linear Algebra Appl.}
517 (2017), 30--52.

\bibitem{Faria1985}
I. Faria,
Permanental roots and the star degree of a graph,
\textit{Linear Algebra Appl.}
64 (1985), 255--265.

\bibitem{FarhanBaskoro2026}
M. Farhan and E. T. Baskoro,
On the metric dimension and spectrum of graphs,
\textit{Commun. Combin. Optim.}
11 (2026), no.~2, 581--598.

\bibitem{FarhanBaskoroFurther2026}
M. Farhan and E. T. Baskoro,
Further results on the metric dimension and spectrum of graphs,
\textit{CUBO, A Mathematical Journal}
28 (2026), no.~1, 27--42.
\doi{10.56754/0719-0646.2801.027}

\bibitem{GodsilRoyle2001}
C. Godsil and G.F. Royle,
\textit{Algebraic Graph Theory},
Springer, 2001.

\bibitem{GroneMerris1994}
R. Grone and R. Merris,
The Laplacian spectrum of a graph II,
\textit{SIAM J. Discrete Math.}
7 (1994), 221--229.

\bibitem{HararyMelter1976}
F. Harary and R. A. Melter,
On the metric dimension of a graph,
\textit{Ars Combin.}
2 (1976), 191--195.

\bibitem{Hernando2010}
C. Hernando, M. Mora, I. M. Pelayo, C. Seara, and D. R. Wood,
Extremal graph theory for metric dimension and diameter,
\textit{Electron. J. Combin.}
17 (2010), Article R30.
\doi{10.37236/302}

\bibitem{Iswadi2008}
H. Iswadi, E. T. Baskoro, R. Simanjuntak, and A. N. M. Salman,
The metric dimension of graph with pendant edges,
\textit{J. Combin. Math. Combin. Comput.}
65 (2008), 139--145.

\bibitem{Khuller1996}
S. Khuller, B. Raghavachari, and A. Rosenfeld,
Landmarks in graphs,
\textit{Discrete Appl. Math.}
70 (1996), 217--229.

\bibitem{Kirkland2023}
S. Kirkland, H. Monterde, and S. Plosker,
Quantum state transfer between twins in weighted graphs,
\textit{J. Algebr. Combin.}
58 (2023), 623--649.


\bibitem{Knor2022}
M. Knor, J. Sedlar, and R. \v{S}krekovski,
Remarks on the vertex and the edge metric dimension of \(2\)-connected graphs,
\textit{Mathematics} 10 (2022), no.~14, Article 2411.
\doi{10.3390/math10142411}


\bibitem{KooijWindmill}
R. E. Kooij,
On generalized windmill graphs,
\textit{Linear Algebra Appl.}
565 (2019), 25--46.


\bibitem{Okamoto2010}
F. Okamoto, B. Phinezy, and P. Zhang,
The local metric dimension of a graph,
\textit{Math. Bohem.}
135 (2010), no.~3, 239--255.
\doi{10.21136/MB.2010.140702}


\bibitem{PaonePaone2026}
A. Paone and M. Paone,
Line-graph inertia of roses and generalized theta graphs,
preprint, 2026, doi:10.5281/zenodo.21744051.


\bibitem{ReadWilson1998}
R. C. Read and R. J. Wilson,
\textit{An Atlas of Graphs},
Oxford University Press, 1998.


\bibitem{SaravananHJoin2021}
M. Saravanan, S. P. Murugan, and G. Arunkumar,
A generalization of Fiedler's lemma and the spectra of \(H\)-join of
graphs,
\textit{Linear Algebra Appl.}
625 (2021), 20--43.


\bibitem{SimanjuntakAmalgamation2015}
R. Simanjuntak, S. Uttunggadewa, and S. W. Saputro,
Metric dimension for amalgamations of graphs,
in \textit{Combinatorial Algorithms (IWOCA 2014)},
Lecture Notes in Computer Science, vol.~8986,
Springer, 2015, pp.~330--337.
\doi{10.1007/978-3-319-19315-1_29}


\bibitem{Slater1975}
P. J. Slater,
Leaves of trees,
\textit{Congr. Numer.}
14 (1975), 549--559.

\bibitem{Wright1975}
E. M. Wright,
The probability of connectedness of a large unlabelled graph,
\textit{J. London Math. Soc.} (2)
11 (1975), 13--16.

\end{thebibliography}
\end{document}